\documentclass{amsart}%%
\usepackage{amsfonts,amssymb}
\usepackage{enumerate,graphicx}
\usepackage{caption}
\usepackage{subcaption}
\usepackage[square,numbers]{natbib}
\usepackage{mathrsfs,bbm}
\usepackage{tikz,tikzscale,tikz-cd}
\usepackage{multirow,xparse,fp}
\usepackage{environ}
\usepackage{amstext}
\usepackage{hyperref}
\usepackage{diagbox}
 \usepackage[pagewise]{lineno}%\linenumbers
\usepackage{comment}
%\setCJKmainfont{標楷體}
\setcounter{MaxMatrixCols}{10}
\newtheorem{theorem}{Theorem}[section]

\newtheorem{corollary}[theorem]{Corollary}
\newtheorem{lemma}[theorem]{Lemma}
\theoremstyle{definition}

\newtheorem{problem}{Problem}
\newtheorem{remark}[theorem]{Remark}

%æ”¾å¤§dot
% -------------------------------------------------------------
\begin{document}%%
\title{Large Deviation Principle of Multidimensional Multiple Averages on $\mathbb{N}^d$}

\author[Jung-Chao Ban]{Jung-Chao Ban}
\address[Jung-Chao Ban]{Department of Mathematical Sciences, National Chengchi University, Taipei 11605, Taiwan, ROC.}
\address{Math. Division, National Center for Theoretical Science, National Taiwan University, Taipei 10617, Taiwan. ROC.}
\email{jcban@nccu.edu.tw}

\author[Wen-Guei Hu]{Wen-Guei Hu}
\address[Wen-Guei Hu]{College of Mathematics, Sichuan University, Chengdu, 610064, China}
\email{wghu@scu.edu.cn}

\author[Guan-Yu Lai]{Guan-Yu Lai}
\address[Guan-Yu Lai]{Department of Applied Mathematics, National Chiao Tung University, Hsinchu 30010, Taiwan, ROC.}
\email{guanyu.am04g@g2.nctu.edu.tw}

\keywords{large deviation priciple, multiple sum, weighted multiple sum, free energy function, boundary condition.}

%\subjclass[2010]{Primary 37A35, 37B10}
\thanks{Ban is partially supported by the Ministry of Science and Technology, ROC (Contract MOST 109-2115-M-004-002-MY2 and 108-2115-M-004-003). Hu is partially supported by the National Natural Science Foundation of China (Grant 11601355).}

\date{}

\baselineskip=1.2\baselineskip

% -------------------------------------------------------------
\begin{abstract}
This paper establishs the large deviation principle (LDP) for multiple averages on $\mathbb{N}^d$. We extend the previous work of [Carinci et al., Indag. Math. 2012] to multidimensional lattice $\mathbb{N}^d$ for $d\geq 2$. The same technique is also applicable to the weighted multiple average launched by Fan [Fan, Adv. Math. 2021]. Finally, the boundary conditions are imposed to the multiple sum and explicit formulae of the energy functions with respect to the boundary conditions are obtained.

\end{abstract}
\maketitle

% -------------------------------------------------------------
\section{Introduction}
In this article, we would like to study the large deviation rate function of
the (weighted) multiple average in multidimensional lattice $\mathbb{N}^{d}$%
. Before presenting our main results, we would like to explain below the
motivation behind this study. Let $T:X\rightarrow X$ be a map from $X$ into $%
X$ and $\mathbb{F}=(f_{1},\ldots ,f_{d})$ be a $d$-tuple of functions, where 
$f_{i}:X\rightarrow \mathbb{R}$ for $1\leq i\leq d$. The \emph{%
	multiple ergodic theory} is to study the asymptotic behavior of the
\emph{multiple sum} 
\begin{equation}
A_{n}\mathbb{F}(x)=\sum_{k=0}^{n-1}f_{1}(T_{1}^{k}(x))f_{2}(T_{2}^{k}(x))%
\cdots f_{d}(T_{d}^{k}(x))\text{.}  \label{10}
\end{equation}%
Such problem was initiated by Furstenberg \cite{furstenberg1982ergodic} on
his proof of the Szemer\'{e}di's theorem. Host and Kra \cite%
{host2005nonconventional} proved the $L^{2}$-convergence of (\ref{10}) when $%
T_{j}=T^{j}$ and $f_{j}\in L^{\infty }(\mu )$, Bourgain \cite%
{bourgain1990double} proved the almost everywhere convergence when $d=2$.
Later, the multifractal analysis and the dimension theory of the multiple
ergodic averages $\frac{A_{n}\mathbb{F}(x)}{N}$ in $\mathbb{N}$ (or $\mathbb{Z}$) are also interesting
research subjects and have been studied in depth recently (cf. \cite%
{ban2021entropy,ban2019pattern,
	brunet2020dimensions,fan2012level, fan2016multifractal,  kenyon2012hausdorff,peres2014dimensions, peres2012dimension,
	pollicott2017nonlinear}). We also refer the reader to \cite{fan2014some} for
the survey of this subject and find the complete bibliography therein. Those
works concentrate on what are known as \emph{multiplicative subshifts}.
Precisely, let $\Sigma _{m}=\{0,\ldots ,m-1\}$ and $\Omega \subseteq \Sigma
_{m}^{\mathbb{N}}$ be a subset. Suppose $S$ is a semigroup generated by
primes $p_{1},\ldots ,p_{k-1}$. Set 
\begin{equation}
X_{\Omega }^{(S)}=\{(x_{k})_{k=1}^{\infty }\in \Sigma _{m}^{\mathbb{N}%
}:x|_{iS}\in \Omega ,\forall i\in \mathbb{N}\text{, }\gcd (i,S)=1\},
\label{1}
\end{equation}%
where $\gcd (i,S)=1$ means that $\gcd (i,s)=1$, $\forall s\in S$.
It is worth noting that the investigation of $X_{\Omega }^{(S)}$ was started from
the study of the set $X^{p_{1},p_{2},\ldots ,p_{k-1}}$ defined below.
Namely, if $p_{1},\ldots ,p_{k-1}$ are primes, define 
\begin{equation}
X^{p_{1},p_{2},\ldots ,p_{k-1}}=\{(x_{i})\in \Sigma _{m}^{\mathbb{N}%
}:x_{i}x_{ip_{1}}\cdots x_{ip_{k-1}}=0,\forall i\in \mathbb{N}\}\text{.}
\label{2}
\end{equation}%
It is clear that $X^{p_{1},p_{2},\ldots ,p_{k-1}}$ is a special case of $X_{\Omega }^{(S)}$ with $\Omega $ is a closed subset of $\Sigma _{m}^{\mathbb{N}}$. Results for
Hausdorff and Minkowski dimension of (\ref{1}) or (\ref{2}) are obtained in 
\cite{ban2019pattern,fan2012level, kenyon2012hausdorff,peres2014dimensions}%
. The authors call $X_{\Omega }^{(S)}$ `multiplicative subshifts' in \cite%
{kenyon2012hausdorff} since it is invariant under \emph{multiplicative
	integer action}. That is, 
\[
x=(x_{k})_{k\geq 1}\in X_{\Omega }^{(S)}\text{ }\Rightarrow \text{ }\forall
i\in \mathbb{N}\text{, }(x_{ik})_{k\geq 1}\in X_{\Omega }^{(S)}.
\]

For $\mathbf{p}_{1},\ldots ,\mathbf{p}_{k-1}\in \mathbb{N}^{d}$, 
\begin{equation}
X^{\mathbf{p}_{1}\mathbf{,p}_{2},\ldots ,\mathbf{p}_{k-1}}=\{(x_{\mathbf{i}%
})\in \Sigma _{m}^{\mathbb{N}^{d}}:x_{\mathbf{i}}x_{\mathbf{i\cdot p}%
	_{1}}\cdots x_{\mathbf{i\cdot p}_{k-1}}=0,\forall \mathbf{i}\in \mathbb{N}%
^{d}\}\text{,}  \label{3}
\end{equation}%
where $\mathbf{i\cdot j}$ denotes the standard inner product of $\mathbf{i}$
and $\mathbf{j}$, i.e., $\mathbf{i\cdot j=}\sum_{l=1}^{d}i_{l}j_{l}$ for $%
\mathbf{i}=(i_{l})_{l=1}^{d},$ $\mathbf{j}=(j_{l})_{l=1}^{d}\in \mathbb{N}%
^{d}$. It is obvious that $X^{\mathbf{p}_{1}\mathbf{,p}_{2},\ldots ,\mathbf{p%
	}_{k-1}}$ is a $\mathbb{N}^{d}$ version of $X^{p_{1},p_{2},\ldots ,p_{k-1}}$%
. Recently, Ban, Hu and Lai \cite{ban2021entropy} established the Minkowski
dimension of (\ref{3}).
Related works on the dimension theory of the multidimensional multiple sum
can also be found in \cite{brunet2020dimensions}. Let 
\[
m_{1}\geq m_{2}\geq \cdots \geq m_{d}\geq 2,\text{ }\Sigma _{m_{1},\ldots
	,m_{d}}=\left( \Sigma _{m_{1}}\times \cdots \times \Sigma _{m_{d}}\right) ^{%
	\mathbb{N}}\text{.}
\]%
Define 
\begin{equation}
X_{\Omega }^{m_{1},\ldots ,m_{d}}=\{(x_{i}^{(1)},\ldots ,x_{i}^{(d)})\in
\Sigma _{m_{1},\ldots ,m_{d}}:(x_{iq^{l}}^{(1)},\ldots ,x_{iq^{l}}^{(d)})\in
\Omega ,\forall q\nmid i\}.  \label{11}
\end{equation}%
The set (\ref{11}) is called \emph{self-affine sponges under the action of
	multiplicative integers}. Brunet studies the dimensions of (\ref{11}) and
establish the associated Ledrappier-Young formula.

It is stressed that the problems of multifractal analysis and dimension
formula of multiple average on `multidimensional\textbf{\ }lattices' are new
and difficult. The difficulty is that it is not easy to decompose the
multidimensional lattices into independent sublattices according to the
given `multiple constraints', e.g., the $\mathbf{p}_{i}^{\prime }$s in (\ref%
{3}), and calculate its density among the entire lattice. Fortunately, the
technique developed in \cite{ban2021entropy} is useful and leads us to
investigate the LDP for the
multidimensional multiple averages launched by Carinci et al. in 2012 \cite%
{carinci2012nonconventional}, and multidimensional weighted multiple sum
mentioned in \cite{fan2021multifractal}. Both topics are described in the
following two paragraphs.

{\bf LDP for multiple averages on $\mathbb{Z}$.} Let $\mathcal{A}=\{+1,-1\}$. Denote
by $\mathbb{P}_{r}$ the product of Bernoulli with the parameter $r$ on $\mathcal{A}$. For $\sigma \in \mathcal{A}^{\mathbb{Z}} $, the authors \cite%
{carinci2012nonconventional} study the thermodynamic limit of the \emph{free energy function} associated to the sum 
\begin{equation}
S_{N}(\sigma )=\sum_{i=1}^{N}\sigma _{i}\sigma _{2i},  \label{7}
\end{equation}
defined as 
\[
F_{r}(\beta )=\lim_{N\rightarrow \infty }\frac{1}{N}\log \mathbb{E}%
_{r}(e^{\beta S_{N}})\text{.}
\]%
We note that if we think (\ref{7}) as a Hamiltonian and the parameter $\beta 
$ as the inverse temperature in the lattice spin systems on $\mathcal{A}^{\mathbb{Z}} $, this
is the simplest version of the \emph{multiplicative Ising model} defined in
\cite{chazottes2014thermodynamic}. Note that the Hamiltonian (\ref{7}) is long-range, non-translation invariant interaction and much more difficult to treat. In 
\cite{carinci2012nonconventional}, the authors prove that the sequence of
multiple average $\frac{S_{N}}{N}$ satisfies a LDP with the rate function 
\begin{equation}
I_{r}(x)=\sup_{\beta \in \mathbb{R}}(\beta x-F_{r}(\beta ))\text{,}
\label{4}
\end{equation}%
where 
\[
F_{r}(\beta )=\log ([r(1-r)]^{\frac{3}{4}}|v^{T}\cdot e_{+}|\Lambda _{+})+%
\mathcal{G(\beta )}\text{.}
\]%
The reader is referred to \cite{carinci2012nonconventional} for the explicit
definitions of $v$, $e_{+}$, $\Lambda _{+}$ and $\mathcal{G(\beta )}$.
Roughly speaking, the LDP characterizes the limit behavior, as $\epsilon
\rightarrow 0$, of a family of probability measures $\{\mu _{\epsilon }\}$
on a probability space $(X,\mathcal{B})$ in terms of a rate function. In 
\cite{carinci2012nonconventional}, the \emph{rate function} associated with the
multiple average $\frac{S_{N}}{N}$ is defined by 
\begin{equation}
I_{r}(x)=\lim_{\epsilon \rightarrow 0}\lim_{N\rightarrow \infty }-\frac{1}{N}%
\log \mathbb{P}_{r}\left( \frac{S_{N}}{N}\in \lbrack x-\epsilon ,x+\epsilon
]\right) \text{.}  \label{5}
\end{equation}%
The authors prove that (\ref{5}) exists and satisfies the \emph{%
	Fenchel-Legendre transform} (\ref{4}) of the free energy function $%
F_{r}(\beta )$. If $F_{r}(\beta )$ is differentiable, say $F_{r}^{\prime
}(\eta )=y$, the rate function can be clearly demonstrated (Lemma 2.2.31 
\cite{dembo2009ldp}). Namely,  
\[
I_{r}(y)=\eta y-F_{r}(\eta )\text{.}
\]%
Thus, the characterization the differentiability of the free energy function $F_r(\beta)$ is also a major subject of the LDP, and it highly related to the phase
transition phenomena of the multiplicative Ising model (cf. \cite{georgii2011gibbs}). We
refer the reader to \cite{dembo2009ldp, ellis2006entropy} for the formal definitions of LDP and Fenchel-Legendre transform. The multiplicative Ising
model with boundary conditions is also considered in \cite%
{chazottes2014thermodynamic}. The first part of this investigation is to
extend the work of \cite{carinci2012nonconventional} and \cite%
{chazottes2014thermodynamic} to $\mathbb{N}^{d}$ without and with boundary
conditions in Section 3 and Section 5 respectively. We also extend some
results of weighted multiple average \cite{fan2021multifractal} to $\mathbb{N}^d$ version, and
describe them below.

{\bf Multifractal analysis for weighted sums on $\mathbb{Z}$.} Let $(X,T)$ be a topological dynamical system. Fan \cite{fan2021multifractal}
studies the multifractal analysis of the \emph{weighted (Birkhoff) sum} as
follows.%
\begin{equation}
S_{N}^{(w)}f(x)=\sum_{n=1}^{N}w_{n}f(T^{n}x).  \label{6}
\end{equation}%
Suppose $(w_{n})$ takes a finite number of values and $%
f_{n}(x)=x_{n}g_{n}(x_{n-1},\ldots )$, where $g_{n}$ depends on finite
number of coordinates ($(\rm C1)$, Theorem \ref{theorem:4.1-1}) and $\left( w_{n}\right) $
satisfies the frequence condition ($(\rm C2)$, Theorem \ref{theorem:4.1-1}). The \emph{spectrum of the
Hausdorff dimension} of the level set $E(\alpha )$(defined in (\ref{level set E})) is obtained in Theorem \ref{theorem:4.1-1}.
Let $\mu $ be the M\"{o}bius function, the author also considers the level
set $F(\alpha )$ (defined in (\ref{level F})). The dimension spectrum for the level set $%
F(\alpha )$ is also obtained in Theorem \ref{theorem:4.1-2}. The second part of this study is
to establish the LDP based on the weighted multiple sum 
\begin{equation}
S_{N}^{(w)}=\sum_{i=1}^{N}w_{n}\sigma _{i}\sigma _{2i}  \label{13}
\end{equation}
in $\mathbb{N}^{d}$. Our main results are presented below.       

Suppose $\mathbf{N}=(N_{1},N_{2},\ldots N_{d})\in \mathbb{N}^{d}$ and $%
\sigma \in \mathcal{A}^{\mathbb{N}^{d}}$, the (\emph{multidimensional}) 
\emph{multiple sum} is defined as 
\begin{equation}
S_{N_{1}\times N_{2}\times \cdots \times N_{d}}^{\mathbf{p}}(\sigma
)=\sum_{i_{1}=1}^{N_{1}}\sum_{i_{2}=1}^{N_{2}}\cdots
\sum_{i_{d}=1}^{N_{d}}\sigma _{\mathbf{i}}\sigma _{\mathbf{p\cdot i}}\text{.}
\label{14}
\end{equation}%
Following \cite{carinci2012nonconventional}, let $\mathbb{P}_{r}$ be a
product of Bernoulli with the parameter $r$ over two symbols on $\mathcal{A}$%
. The \emph{free energy function} associated with the sum $S_{N_{1}\times
	N_{2}\times \cdots \times N_{d}}^{\mathbf{p}}$ is set as 
\begin{equation}
F_{r}(\beta )=\lim_{\mathbf{N\rightarrow \infty }}\frac{1}{N_{1}N_{2} \cdots N_{d}}\log \mathbb{E}_{r}(\exp (\beta
S_{N_{1}\times N_{2}\times \cdots \times N_{d}}^{\mathbf{p}}))\text{.}\footnote{To shorten notation, we write $\mathbf{N}\rightarrow \infty$ instead of $N_1,N_2,...,N_d\rightarrow \infty$.}
\label{9}
\end{equation}%

 The associated \emph{large deviation rate function} of the multiple average 
\begin{equation} \label{multiple sum average}
 \frac{S_{N_{1}\times N_{2}\times \cdots \times N_{d}}^{\mathbf{p}}}{%
 	N_{1}N_{2}\cdots N_{d}}
\end{equation} 
is defined as  
\begin{equation}
I_{r}(x)=\lim_{\varepsilon \rightarrow 0}\lim_{\mathbf{N}\rightarrow \infty
}-\frac{1}{N_{1}N_{2}\cdots N_{d}}\log \mathbb{P}%
_{r}\left( \frac{S_{N_{1}\times N_{2}\times \cdots \times N_{d}}^{\mathbf{p}}%
}{N_{1}N_{2} \cdots  N_{d}}\in \lbrack x-\varepsilon
,x+\varepsilon ]\right) \text{.}  \label{8}
\end{equation}%
In Theorem \ref{theorem:3.1}, the explicit formula of $F_{r}(\beta )$ is derived and $%
\beta \mapsto F_{r}(\beta )$ is differentiable. Furthermore, the
multiple average (\ref{multiple sum average}) satisfies
the LDP. Due to the fact that $\beta \rightarrow F_{r}(\beta )$ is
differentiable, the explicit formula of $I_{r}(x)$ is obtained.
Surprisingly, the formula of $F_{r}(\beta )$ indicates that $I_{1/2}(x)$ is
independent of the dimension $d\in \mathbb{N}$ and $\mathbf{p}\in \mathbb{N}%
^{d}$. On the other hand, let $\mathbf{w}=(w_{\mathbf{i}})_{\mathbf{i}\in 
	\mathbb{N}^{d}}$, the \emph{weighted multiple sum} is defined
as 
\begin{equation}\label{weighted multiple sum}
S_{N_{1}\times N_{2}\times \cdots \times N_{d}}^{\mathbf{p,w}%
}=\sum_{i_{1}=1}^{N_{1}}\sum_{i_{2}=1}^{N_{2}}\cdots
\sum_{i_{d}=1}^{N_{d}}w_{\mathbf{i}}\sigma _{\mathbf{i}}\sigma _{\mathbf{%
		p\cdot i}}\text{.}
\end{equation}

Denote by $F_{r}^{\mathbf{w}}(\beta )$ (resp. $I_{r}^{\mathbf{w}}(x)$) the
corresponding free energy function (resp. large deviation rate function) of the
weighted multiple average 
\begin{equation}
\frac{S_{N_{1}\times N_{2}\times \cdots \times N_{d}}^{\mathbf{p},\mathbf{w}}%
}{N_{1}N_{2} \cdots  N_{d}}  \label{12}
\end{equation}
as (\ref{9}) (resp. (\ref{8})). The formula of $F_{1/2}^{\mathbf{w}}(\beta )$
is rigorously calculated in Theorem \ref{theorem:4.1}, the LDP for
the average (\ref{12}) are also given therein. It is worthy emphasize that
the formula $I_{1/2}^{\mathbf{w}}$ in Theorem \ref{theorem:4.1} is almost identical to the
dimension formula (\ref{level set E}) established in Theorem \ref{theorem:4.1-1} \cite%
{fan2021multifractal}, and it also does not depend on the dimension $d\in 
\mathbb{N}$ and the multiple constraint $\mathbf{p}\in \mathbb{N}^{d}$. In
addition, similar results are also obtained if $(w_{\mathbf{i}})_{\mathbf{i}%
	\in \mathbb{N}^{d}}$ is the M\"{o}bius function (Corollary \ref{coro mobi}). Finally, the
boundary conditions on the multiple sum (\ref{14}) are imposed and the
corresponding energy functions are defined. The explicit formulae of these
energy functions are determined in Section 5.

\section{Preliminaries}
In this section, we provide necessary materials and results on the decomposition of the multidimensional lattice $\mathbb{N}^d$ into independent sublatticies and calculate their densities. 

Given $p_1,p_2,...,p_d\geq2$ and $N_1,N_2,...,N_d\geq 1$, let $\mathcal{M}_{{\bf p}}=\{ (p_1^m,p_2^m,...,p_d^m) :m\geq 0\}$ be the subset of $\mathbb{N}^d$, denoting $\mathcal{M}_{{\bf p}}({\bf i})=\{ (i_1p_1^m,i_2p_2^m,...,i_dp_d^m) :m\geq 0\}$ as the lattice $\mathcal{M}_{\bf p}$ starts at ${\bf i}$, and let $\mathcal{I}_{{\bf p}}=\{ {\bf i} : p_j\nmid i_j\mbox{ for some }1\leq j \leq d \}$ be the complementary index set of $\mathbb{N}^d$.

More definitions are needed to characterize the partition of $N_1\times N_2\times \cdots \times N_d$ lattice. Let $\mathcal{N}_{N_1\times N_2\times \cdots \times N_d}=\{ {\bf i} : 1\leq i_j\leq N_j \mbox{ for all } 1\leq j \leq d  \}$ be $N_1\times N_2\times \cdots \times N_d$ lattice and $\mathcal{L}_{N_1\times N_2\times \cdots \times N_d}({\bf i})= \mathcal{M}_{{\bf p}}({\bf i})\cap \mathcal{N}_{N_1\times N_2\times \cdots \times N_d}$ be the subset of $\mathcal{M}_{\bf p}({\bf i}) $ in $N_1\times N_2\times \cdots \times N_d$ lattice. Then we define $\mathcal{J}_{N_1\times N_2\times \cdots \times N_d;\ell}= \{ {\bf i}\in \mathcal{N}_{N_1\times N_2\times \cdots \times N_d} : |\mathcal{L}_{N_1\times N_2\times \cdots \times N_d}({\bf i})|=\ell  \}$ as the set of points such that $\mathcal{M}_{\bf p}({\bf i})$ in $N_1\times N_2\times \cdots \times N_d$ lattice with length exactly $\ell$, where $| \cdot |$ denotes cardinal numbers. Let $\mathcal{K}_{N_1\times N_2\times \cdots \times N_d;\ell}=\{ {\bf i}\in \mathcal{I}_{{\bf p}}\cap \mathcal{N}_{N_1\times N_2\times \cdots \times N_d} : |\mathcal{L}_{N_1\times N_2\times \cdots \times N_d}({\bf i})|=\ell  \}$ be the set of points in $\mathcal{I}_{\bf p}$ that $\mathcal{M}_{\bf p}({\bf i})$ in $N_1\times N_2\times \cdots \times N_d$ lattice with length exactly $\ell$. The following lemma computes the limit of density of $\mathcal{K}_{N_1\times N_2\times \cdots \times N_d;\ell}$ which is $\mathbb{N}^d$ version of Lemmas 2.1 and 2.2 in \cite{ban2021entropy}.

\begin{lemma}\label{lemma:2.2}
	For $p_1,p_2,...,p_d\geq2$,
	\begin{equation*}  %\begin{align*}
		\mathbb{N}^d=\displaystyle\mathop{\coprod}\limits_{{\bf i} \in \mathcal{I}_{{\bf p}}}\mathcal{M}_{{\bf p}}({\bf i}).
	\end{equation*}  %\end{align*}
\end{lemma}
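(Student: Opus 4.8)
The plan is to exhibit, for every point of $\mathbb{N}^d$, a unique base point in $\mathcal{I}_{\mathbf{p}}$ whose multiplicative lattice contains it; this single fact yields both the covering and the disjointness encoded in the coproduct. The engine is the $p_j$-adic valuation applied coordinatewise, i.e. the one-dimensional factorization ``largest prime power times a unit'' from \cite{ban2021entropy}, coupled across the $d$ coordinates by a single exponent.

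First I would prove the covering $\mathbb{N}^d \subseteq \bigcup_{\mathbf{i} \in \mathcal{I}_{\mathbf{p}}} \mathcal{M}_{\mathbf{p}}(\mathbf{i})$. Fix $\mathbf{n}=(n_1,\ldots,n_d)\in\mathbb{N}^d$ and let $e_j=v_{p_j}(n_j)$ be the exponent of the largest power of $p_j$ dividing $n_j$. Set $m=\min_{1\leq j\leq d}e_j$ and $\mathbf{i}=(n_1/p_1^{m},\ldots,n_d/p_d^{m})$. Each $i_j$ is a positive integer since $m\leq e_j$, and $\mathbf{n}=(i_1p_1^{m},\ldots,i_dp_d^{m})\in\mathcal{M}_{\mathbf{p}}(\mathbf{i})$. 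If $j_0$ attains the minimum, so $m=e_{j_0}$, then $p_{j_0}\nmid i_{j_0}$, whence $\mathbf{i}\in\mathcal{I}_{\mathbf{p}}$. Thus $\mathbf{n}$ lies in at least one lattice of the union, and the reverse inclusion $\mathcal{M}_{\mathbf{p}}(\mathbf{i})\subseteq\mathbb{N}^d$ is immediate.

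Next I would establish disjointness, namely that distinct base points $\mathbf{i}\neq\mathbf{i}'$ in $\mathcal{I}_{\mathbf{p}}$ give disjoint lattices. Suppose $\mathbf{n}\in\mathcal{M}_{\mathbf{p}}(\mathbf{i})\cap\mathcal{M}_{\mathbf{p}}(\mathbf{i}')$, say $n_j=i_jp_j^{m}=i_j'p_j^{m'}$ for all $j$, and assume $m\leq m'$. Then $i_j=i_j'p_j^{m'-m}$ for every $j$; if $m<m'$, this forces $p_j\mid i_j$ for \emph{all} $j$, contradicting $\mathbf{i}\in\mathcal{I}_{\mathbf{p}}$. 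Hence $m=m'$ and $\mathbf{i}=\mathbf{i}'$. Equivalently, the exponent $m$ witnessing membership is pinned to the minimal valuation, so the base point attached to $\mathbf{n}$ is unique. Combining the two parts delivers the coproduct decomposition.

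The arithmetic here is routine, so the one genuinely new point relative to the one-dimensional statements in \cite{ban2021entropy} is the coupling of the coordinates by a \emph{single} exponent $m$: I must verify that choosing $m=\min_j e_j$ (rather than each $e_j$ separately) is exactly what the defining condition of $\mathcal{I}_{\mathbf{p}}$ demands, and that this condition is strong enough to exclude $m<m'$ in the uniqueness step. This is precisely where the proof uses that $\mathcal{I}_{\mathbf{p}}$ is defined with ``$p_j\nmid i_j$ for some $j$'' rather than ``for all $j$,'' and I expect this to be the only step requiring care.
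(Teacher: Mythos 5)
Your proposal is correct and takes essentially the same route as the paper's proof: the covering step divides each coordinate by the minimal $p_j$-adic valuation (the paper's $\gamma=\min_k\alpha_k$ is your $m=\min_j e_j$, with the same observation that the minimizing coordinate certifies membership in $\mathcal{I}_{\mathbf{p}}$), and the disjointness step is the paper's argument in contrapositive form, deriving $p_j\mid i_j$ for all $j$ from unequal exponents and contradicting the defining ``$p_j\nmid i_j$ for some $j$.'' Your closing remark correctly identifies the only delicate point, and your handling of it agrees with the paper.
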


\begin{proof}
	We first claim that for all $ {\bf i} \neq {\bf i}' \in  \mathcal{I}_{\bf p}$, $\mathcal{M}_{\bf p}({\bf i})\cap\mathcal{M}_{\bf p}({\bf i}')=\emptyset$. Indeed, suppose this does not hold. Then there exist ${\bf i} \neq {\bf i}' \in \mathcal{I}_{\bf p}$ such that $\mathcal{M}_{\bf p}({\bf i})\cap\mathcal{M}_{\bf p}({\bf i}')\neq\emptyset$. Since ${\bf i} \neq {\bf i}'$, then there exist $ m_1\neq m_2\geq 0$ such that $ (i_1p_1^{m_1},..., i_dp_d^{m_1})=(i_1'p_1^{m_2},..., i_d'p_d^{m_2})$. Without loss of generality, we assume $m_1>m_2$, then  $i_kp_k^{m_1-m_2} = i_k'$ for all $1\leq k \leq d$, which gives $ p_k | i_k'$ for all $1\leq k \leq d$. This contradicts ${\bf i}' \in \mathcal{I}_{\bf p}$. 
	It remains to show that the equality holds. For ${\bf i} \in \mathbb{N}^d$, then $i_k=i_k'p_k^{\alpha_k}$ with $p_k\nmid i_k'$ and $\alpha_k\geq 0 $ for all $1\leq k \leq d$. Take $\gamma=\min_k \{\alpha_k\}$, then $(\frac{i_1}{p_1^{\gamma}},...,\frac{i_d}{p_d^{\gamma}}) \in \mathcal{I}_{\bf p} $, which implies ${\bf i} \in \mathcal{M}_{\bf p}(\frac{i_1}{p_1^{\gamma}},...,\frac{i_d}{p_d^{\gamma}})$. Since the converse is clear, the proof is thus completed.
\end{proof}

\begin{lemma}\label{lemma:1.5} 
	For $N_1, N_2,...,N_d$, and $\ell \geq 1$, we have the following assertions.	
\item[1.]$\displaystyle	|\mathcal{J}_{N_1\times N_2\times \cdots \times N_d;\ell}| = \prod_{k=1}^{d}\left\lfloor\frac{N_k}{p_k^{\ell-1}}\right\rfloor-\prod_{k=1}^{d}\left\lfloor\frac{N_k}{p_k^\ell}\right\rfloor.$
\item[2.]$\displaystyle\lim_{{\bf N}\to \infty}\frac{| \mathcal{K}_{N_1\times N_2\times \cdots \times N_d;\ell} | }{ | \mathcal{J}_{N_1\times N_2\times \cdots \times N_d;\ell} | }=1-\frac{1}{p_1p_2\cdots p_d}.$
\item[3.] $\displaystyle\lim_{{\bf N}\to \infty}\frac{1}{N_1\cdots N_d}\sum_{\ell=1}^{N_1\cdots N_d}|\mathcal{K}_{N_1 \times \cdots \times N_d;\ell}|\log F_{\ell}
=\sum_{\ell=1}^{\infty}\displaystyle{\lim_{{\bf N}\to \infty}}\frac{|\mathcal{K}_{N_1\times \cdots \times N_d;\ell}|}{N_1\cdots N_d}\log F_{\ell}.$		
\end{lemma}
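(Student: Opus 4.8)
The plan is to base everything on counting the sets
$G_\ell:=\{\mathbf{i}\in\mathcal{N}_{N_1\times\cdots\times N_d}: i_kp_k^{\ell-1}\le N_k\text{ for all }1\le k\le d\}$
of points whose chain has length at least $\ell$; to lighten notation I write $\mathcal{J}_\ell,\mathcal{K}_\ell$ for $\mathcal{J}_{N_1\times\cdots\times N_d;\ell},\mathcal{K}_{N_1\times\cdots\times N_d;\ell}$. Since $p_k\ge 2$, for any $\mathbf{i}\in\mathcal{N}_{N_1\times\cdots\times N_d}$ the coordinates of $(i_1p_1^m,\dots,i_dp_d^m)$ strictly increase in $m$, so $\mathcal{L}_{N_1\times\cdots\times N_d}(\mathbf{i})=\{m\ge 0: i_kp_k^m\le N_k\ \forall k\}$ is an initial segment $\{0,1,\dots,\ell-1\}$ and $|\mathcal{L}_{N_1\times\cdots\times N_d}(\mathbf{i})|=\ell$ precisely when $\mathbf{i}\in G_\ell\setminus G_{\ell+1}$. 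As $|G_\ell|=\prod_k\lfloor N_k/p_k^{\ell-1}\rfloor$ and $G_{\ell+1}\subseteq G_\ell$, Part 1 follows by subtraction.

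For Part 2 I would first prove the exact identity $|G_\ell\cap\mathcal{I}_{\mathbf{p}}|=|\mathcal{J}_\ell|$. On the complement $\mathcal{I}_{\mathbf{p}}^c=\{\mathbf{i}:p_k\mid i_k\ \forall k\}$ the map $\mathbf{i}\mapsto(i_1/p_1,\dots,i_d/p_d)$ is a bijection of $G_\ell\cap\mathcal{I}_{\mathbf{p}}^c$ onto $G_{\ell+1}$, since writing $i_k=p_kj_k$ turns the condition $i_kp_k^{\ell-1}\le N_k$ into $j_kp_k^{\ell}\le N_k$; hence $|G_\ell\cap\mathcal{I}_{\mathbf{p}}|=|G_\ell|-|G_{\ell+1}|=|\mathcal{J}_\ell|$. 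Restricting to chain length exactly $\ell$ then gives $|\mathcal{K}_\ell|=|\mathcal{J}_\ell|-|\mathcal{J}_{\ell+1}|$, so
\[
\frac{|\mathcal{K}_\ell|}{|\mathcal{J}_\ell|}=1-\frac{|\mathcal{J}_{\ell+1}|}{|\mathcal{J}_\ell|}.
\]
Because $\lfloor N_k/p_k^{j}\rfloor=(N_k/p_k^{j})(1+o(1))$ as $\mathbf{N}\to\infty$ for fixed $j$, one has $\frac{1}{N_1\cdots N_d}|\mathcal{J}_\ell|\to (p_1\cdots p_d)^{-(\ell-1)}\bigl(1-(p_1\cdots p_d)^{-1}\bigr)$, and the displayed ratio converges to $1-\frac{1}{p_1p_2\cdots p_d}$.

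For Part 3 the plan is to interchange $\lim_{\mathbf{N}\to\infty}$ with the (effectively finite) sum over $\ell$ by dominated convergence for series. Put $V=N_1\cdots N_d$ and $P=p_1\cdots p_d$. Part 1 supplies the uniform-in-$\mathbf{N}$ estimate $\frac{|\mathcal{K}_\ell|}{V}\le\frac{|G_\ell|}{V}=\prod_k\frac{\lfloor N_k/p_k^{\ell-1}\rfloor}{N_k}\le P^{-(\ell-1)}$, and $\mathcal{K}_\ell=\emptyset$ as soon as $p_k^{\ell-1}>N_k$ for some $k$, so the sum $\sum_{\ell=1}^{V}$ may be replaced by $\sum_{\ell=1}^{\infty}$. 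Using that the relevant $F_\ell$ (a transfer-type partition quantity over the $\ell-1$ bonds of a chain of $\ell$ sites, as in the free-energy computations that follow) satisfies $|\log F_\ell|\le C\ell$ for some $C$ independent of $\mathbf{N}$, the general term $\frac{|\mathcal{K}_\ell|}{V}\log F_\ell$ is dominated by $C\ell\,P^{-(\ell-1)}$, which is summable since $P\ge 2^d>1$. Part 2 provides the termwise limit of $\frac{|\mathcal{K}_\ell|}{V}\log F_\ell$, so Tannery's theorem (dominated convergence for series) delivers the claimed equality.

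The hard part is precisely this last step: producing the uniform geometric bound on $\frac{|\mathcal{K}_\ell|}{V}$ together with the at-most-linear growth of $\log F_\ell$. These two estimates are what forbid mass from leaking to large $\ell$ as $\mathbf{N}\to\infty$; they are the only nonformal ingredient, and once they are in hand the limit--sum interchange is automatic.
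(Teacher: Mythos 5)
Your proposal is correct, and for parts 1 and 2 it takes a genuinely different---and in fact sharper---route than the paper. For part 1 the paper writes $\mathcal{J}_{N_1\times\cdots\times N_d;\ell}$ as $A\cap\bigcup_{k}\{\mathbf{i}:i_kp_k^{\ell}>N_k\}$ and runs a full inclusion--exclusion over which coordinates overshoot, then collapses the resulting alternating sum; your observation that the admissible exponents $m$ form an initial segment, so that $\mathcal{J}_{\ell}=G_\ell\setminus G_{\ell+1}$ with $G_\ell$ an honest box of cardinality $\prod_{k}\lfloor N_k/p_k^{\ell-1}\rfloor$, reduces this to a single telescoping subtraction. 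For part 2 the paper proves an asymptotic density statement for $\mathcal{I}_{\bf p}$ inside large rectangles and applies a squeeze argument; your bijection $\mathbf{i}\mapsto(i_1/p_1,\ldots,i_d/p_d)$ from $G_\ell\cap\mathcal{I}_{\bf p}^{c}$ onto $G_{\ell+1}$ gives the \emph{exact} identities $|G_\ell\cap\mathcal{I}_{\bf p}|=|\mathcal{J}_\ell|$ and $|\mathcal{K}_\ell|=|\mathcal{J}_\ell|-|\mathcal{J}_{\ell+1}|$, which is strictly stronger: it yields the limit in part 2 at once and also hands you, with no extra work, the termwise limit $|\mathcal{K}_\ell|/(N_1\cdots N_d)\to(P-1)^2/P^{\ell+1}$ with $P=p_1\cdots p_d$, precisely the coefficient the paper needs later in its formula (\ref{108}); the paper's squeeze argument, by contrast, is stated for rectangles and tacitly uses that $\mathcal{J}_\ell$ is a difference of two rectangles of comparable size. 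Part 3 is essentially the paper's argument in different clothing: the paper extends the sum to infinity via a truncated sequence $\bar{K}_{N_1\times\cdots\times N_d;\ell}$ and applies the Weierstrass M-test with the same geometric bound $|\mathcal{J}_\ell|/(N_1\cdots N_d)\le P^{-(\ell-1)}$, which is exactly your Tannery/dominated-convergence step. One hypothesis-level remark: you assume $|\log F_\ell|\le C\ell$, whereas the paper implicitly assumes $\sum_{\ell}\log F_\ell/P^{\ell-1}<\infty$ (the lemma as stated imposes no condition on $F_\ell$ at all); your assumption implies the paper's and holds for the transfer-matrix quantities to which the lemma is applied, so nothing is lost.
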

\begin{proof}	
	\item[\bf 1.] Since $|\mathcal{L}_{N_1\times \cdots \times N_d}({\bf i})|=\ell$, we have
	\[
	\mathcal{J}_{N_1\times \cdots \times N_d;\ell}=\{ {\bf i} : i_kp_k^{\ell-1}\leq N_k \mbox{ for all }1\leq k \leq d \}
	\cap \left( \cup_{k=1}^{d} \{{\bf i} : i_kp_k^\ell>N_k \} \right).
	\]
	  Thus, the Inclusion-exclusion principle infers that
	\begin{equation*}
		\begin{aligned}
			|\mathcal{J}_{N_1\times \cdots \times N_d;\ell}|
			&=\left| \bigcup_{n=1}^{d}\left( A
			\cap \{{\bf i} : i_np_n^\ell>N_n \} \right)\right|\\
			&=\sum_{n=1}^{d}\left|A
			\cap  \{{\bf i} : i_np_n^\ell>N_n \} \right|\\
			&-\sum_{1\leq n_1<n_2 \leq d}\left|A
			\cap  \{{\bf i} : i_{n_1}p_{n_1}^\ell>N_{n_1} \mbox{ and }i_{n_2}p_{n_2}^\ell>N_{n_2} \}\right|	\\	
			&+\sum_{1\leq n_1<n_2<n_3\leq d}\left|A
			\cap  \{{\bf i} : i_{n_1}p_{n_1}^\ell>N_{n_1} ,i_{n_2}p_{n_2}^\ell>N_{n_2}\mbox{ and }i_{n_3}p_{n_3}^\ell>N_{n_3} \}\right|\\
			&-\cdots+(-1)^{d-1}\left|A
			\cap  \{{\bf i} : i_{1}p_{1}^\ell>N_{1} ,i_{2}p_{2}^\ell>N_{2},...,i_{d}p_{d}^\ell>N_{d} \}\right|,
				\end{aligned}
	\end{equation*}	
where $A=\{ {\bf i} : i_kp_k^{\ell-1}\leq N_k \mbox{ for all }1\leq k \leq d \}$.

It follows that
			\begin{equation*}
			\begin{aligned}	
			|\mathcal{J}_{N_1\times \cdots \times N_d;\ell}|&=\sum_{n=1}^{d}\left[ \left(\left\lfloor\frac{N_n}{p_n^{\ell-1}}\right\rfloor-\left\lfloor\frac{N_n}{p_n^\ell}\right\rfloor\right)\prod_{k\neq n}\left\lfloor\frac{N_k}{p_k^{\ell-1}}\right\rfloor \right]\\
			&-\sum_{1\leq n_1<n_2 \leq d}\left[ \prod_{k_1=n_1,n_2}\left(\left\lfloor\frac{N_{k_1}}{p_{k_1}^{\ell-1}}\right\rfloor-\left\lfloor\frac{N_{k_1}}{p_{k_1}^\ell}\right\rfloor\right)\prod_{k_2\neq n_1,n_2}\left\lfloor\frac{N_{k_2}}{p_{k_2}^{\ell-1}}\right\rfloor \right]\\
			&+\sum_{1\leq n_1<n_2<n_3\leq d}\left[ \prod_{k_1=n_1,n_2,n_3}\left(\left\lfloor\frac{N_{k_1}}{p_{k_1}^{\ell-1}}\right\rfloor-\left\lfloor\frac{N_{k_1}}{p_{k_1}^\ell}\right\rfloor\right)\prod_{k_2\neq n_1,n_2,n_3}\left\lfloor\frac{N_{k_2}}{p_{k_2}^{\ell-1}}\right\rfloor \right]\\
			&-\cdots+(-1)^{d-1}\prod_{k=1}^{d}\left(\left\lfloor\frac{N_{k}}{p_{k}^{\ell-1}}\right\rfloor-\left\lfloor\frac{N_{k}}{p_{k}^\ell}\right\rfloor\right).
					\end{aligned}
		\end{equation*}	
Thus, we have  			
					\begin{equation*}
			\begin{aligned}		
				|\mathcal{J}_{N_1\times \cdots \times N_d;\ell}|&=\prod_{k=1}^{d}\left\lfloor\frac{N_k}{p_k^{\ell-1}}\right\rfloor-\prod_{k=1}^{d}\left\lfloor\frac{N_k}{p_k^\ell}\right\rfloor.
		\end{aligned}
	\end{equation*}	
	\item[\bf 2.] For $m_{2}^{(i)}>m_{1}^{(i)}\geq 1$, $1\leq i \leq d$ and let the rectangular lattice
	\begin{equation*}
		\mathcal{R}_{m_{1}^{(1)},m_{2}^{(1)};...;m_{1}^{(d)},m_{2}^{(d)}}=\left\{{\bf i}\in\mathbb{N}^{d}:m_{1}^{(k)}\leq i_k\leq m_{2}^{(k)} \text{ for all } 1\leq k \leq d\right\}.
	\end{equation*}
	Clearly, the complement of $\mathcal{I}_{\bf p}$ is $\mathcal{I}^{c}_{\bf p}=\{{\bf i}:p_k\mid i_k \text{ for all }1\leq k \leq d\}$ and
	\begin{equation*}
		\left|	\mathcal{R}_{m_{1}^{(1)},m_{2}^{(1)};...;m_{1}^{(d)},m_{2}^{(d)}}\cap\mathcal{I}_{\bf p}\right|=\left|	\mathcal{R}_{m_{1}^{(1)},m_{2}^{(1)};...;m_{1}^{(d)},m_{2}^{(d)}} \right|-
		\left|	\mathcal{R}_{m_{1}^{(1)},m_{2}^{(1)};...;m_{1}^{(d)},m_{2}^{(d)}}\cap\mathcal{I}^{c}_{\bf p} \right|.
	\end{equation*}
	Thus,
	\begin{equation*}
		\begin{aligned}
			&\left|	\mathcal{R}_{m_{1}^{(1)},m_{2}^{(1)};...;m_{1}^{(d)},m_{2}^{(d)}}\cap\mathcal{I}_{\bf p} \right|\\
			&\geq \left|	\mathcal{R}_{m_{1}^{(1)},m_{2}^{(1)};...;m_{1}^{(d)},m_{2}^{(d)}} \right|-\frac{1}{p_1p_2\cdots p_d}\left|\mathcal{R}_{m_{1}^{(1)},m_{2}^{(1)}+2p_1;...;m_{1}^{(d)},m_{2}^{(d)}+2p_d} \right|
		\end{aligned}
	\end{equation*}
	and
	\begin{equation*}
		\begin{aligned}
			&\left|	\mathcal{R}_{m_{1}^{(1)},m_{2}^{(1)};...;m_{1}^{(d)},m_{2}^{(d)}}\cap\mathcal{I}_{\bf p} \right|\\
			&\leq \left|	\mathcal{R}_{m_{1}^{(1)},m_{2}^{(1)};...;m_{1}^{(d)},m_{2}^{(d)}} \right|-\frac{1}{p_1p_2\cdots p_d} \left|\mathcal{R}_{m_{1}^{(1)},m_{2}^{(1)}-2p_1;...;m_{1}^{(d)},m_{2}^{(d)}-2p_d} \right|.
		\end{aligned}
	\end{equation*}
	Then, by the Squeeze theorem,
	\begin{equation*}
		\underset{1\leq k \leq d}{\underset{m_{2}^{(k)}-m_{1}^{(k)}\rightarrow\infty }{\lim}}\frac{|	\mathcal{R}_{m_{1}^{(1)},m_{2}^{(1)};...;m_{1}^{(d)},m_{2}^{(d)}}\cap\mathcal{I}_{\bf p}|}{|	\mathcal{R}_{m_{1}^{(1)},m_{2}^{(1)};...;m_{1}^{(d)},m_{2}^{(d)}}|}=1-\frac{1}{p_1p_2\cdots p_d}.
	\end{equation*}
	Consequently,
	\begin{equation*}
		\begin{aligned}
			\displaystyle{\lim_{{\bf N}\to \infty}} \frac{ | \mathcal{K}_{N_1\times \cdots \times N_d;\ell} |}{|\mathcal{J}_{N_1\times \cdots \times N_d;\ell} | }  &=\displaystyle{\lim_{{\bf N}\to \infty}} \frac{ | \mathcal{J}_{N_1\times \cdots \times N_d;\ell} \cap \mathcal{I}_{\bf p}|}{|\mathcal{J}_{N_1\times \cdots \times N_d;\ell} | }=1-\frac{1}{p_1p_2\cdots p_d}.
		\end{aligned}
	\end{equation*}	
	\item[\bf 3.] Define
	\begin{center}
		$\bar{K}_{N_1\times \cdots \times N_d;\ell}=
		\left\{
		\begin{array}{ll}
		|\mathcal{K}_{N_1\times \cdots \times N_d;\ell} |& \mbox{ if } \ell \leq N_1\cdots N_d,
		\\
		0 & \mbox{ if } \ell > N_1\cdots N_d.
		\end{array}
		\right.$
	\end{center}
	Then
	\begin{equation*}  
		\begin{aligned}
			\displaystyle{\lim_{{\bf N}\to \infty}}\frac{1}{N_1\cdots N_d}\sum_{\ell=1}^{N_1\cdots N_d} | \mathcal{K}_{N_1\times \cdots \times N_d;\ell} | \log F_{\ell}=\displaystyle{\lim_{{\bf N}\to \infty}}\frac{1}{N_1\cdots N_d}\sum_{\ell=1}^{\infty}\bar{K}_{N_1\times \cdots \times N_d;\ell}\log F_{\ell}.
		\end{aligned}
	\end{equation*}  
	Hence from the Weierstrass M-test with
	\begin{equation*}  
		\begin{aligned}
			\displaystyle\frac{1}{N_1\cdots N_d}\left|\bar{K}_{N_1\times \cdots \times N_d;\ell}\log F_{\ell}\right|
			&\leq  \frac{1}{N_1\cdots N_d} |\mathcal{J}_{N_1\times \cdots \times N_d;\ell} | \log F_{\ell}\\
			&=  \frac{1}{N_1\cdots N_d}\left(\prod_{k=1}^{d}\left\lfloor\frac{N_k}{p_k^{\ell-1}}\right\rfloor-\prod_{k=1}^{d}\left\lfloor\frac{N_k}{p_k^\ell}\right\rfloor\right)\log F_{\ell} \\
			&\leq  \frac{1}{N_1\cdots N_d}\left(\frac{N_1\cdots N_d}{p_1^{\ell-1}p_2^{\ell-1}\cdots p_d^{\ell-1}}\right) \log F_{\ell}\\
			&= \frac{1}{(p_1p_2\cdots p_d)^{\ell-1}}\log F_{\ell}
		\end{aligned}
	\end{equation*}  
	for all  $N_1,...,N_d\in \mathbb{N}$ and $\displaystyle\sum_{\ell=1}^{\infty}\frac{\log F_{\ell}}{(p_1p_2\cdots p_d)^{\ell-1}}<\infty$, we deduce that $\displaystyle\sum_{\ell=1}^{\infty}\frac{\bar{K}_{N_1\times \cdots \times N_d;\ell}\log F_{\ell}}{N_1\cdots N_d}$ converges uniformly in $N_1,...,N_d$. 
	
	This implies
\begin{equation*}
\begin{aligned}
			\displaystyle{\lim_{{\bf N}\to \infty}}\frac{1}{N_1\cdots N_d}\sum_{\ell=1}^{N_1\cdots N_d} | \mathcal{K}_{N_1\times \cdots \times N_d;\ell} | \log F_{\ell}&=\displaystyle{\lim_{{\bf N}\to \infty}}\frac{1}{N_1\cdots N_d}\sum_{\ell=1}^{\infty}\bar{K}_{N_1\times \cdots \times N_d;\ell}\log F_{\ell}\\
			&=\displaystyle\sum_{\ell=1}^{\infty}{\lim_{{\bf N}\to \infty}}\frac{1}{N_1\cdots N_d}\bar{K}_{N_1\times \cdots \times N_d;\ell}\log F_{\ell}\\
			&=\displaystyle\sum_{\ell=1}^{\infty}{\lim_{{\bf N}\to \infty}}\frac{1}{N_1\cdots N_d} | \mathcal{K}_{N_1\times \cdots \times N_d;\ell} | \log F_{\ell}.
\end{aligned}
\end{equation*}
	The proof is complete.
\end{proof}

\section{LDP of multiple averages on $\mathbb{N}^d$}

In this section, we establish the LDP for the mutiple average (\ref{multiple sum average}), where the multiple sum $S^{{\bf p}}_{N_1\times N_2\times \cdots \times N_d}$ is defined in (\ref{14}). The associated free energy function $F_r(\beta)$ and the large deviation rate function $I_r(x)$ are also defined in (\ref{9}) and (\ref{8}) respectively.

Let $N_1,N_2,...,N_d\in \mathbb{N}$ and ${\bf p}=(p_1,p_2,...,p_d)\in \mathbb{N}^d$ with $\gcd(p_i,p_j)=1$ for all $1\leq i<j\leq d$. The explicit formula of $F_r(\beta)$ and LDP of the multiple average (\ref{multiple sum average}) are established in Theorem \ref{theorem:3.1} below. The following theorem is essential in the proofs of our results.

\begin{theorem}[G\"{a}rtner-Ellis \cite{dembo2009ldp}, Theorem 2.3.6]\label{theorem:GE}
	If the limit $(\ref{9})$ exists and such limit is finite in a neighborhood of origin, then $(\ref{8})$ is equal to the Fenchel-Legendre transform of $(\ref{9})$, i.e.,
		\begin{equation*}
	\begin{aligned}
	I_r(x)=\sup_{\beta \in \mathbb{R}} \left( \beta x -F_r(\beta) \right).
	\end{aligned}
	\end{equation*}	
Moreover, if the function $(\ref{9})$ is differentiable, say $(F_r)'(\eta)=y$, then 
	\begin{equation*}
	\begin{aligned}
	I_r(y)=\eta y -F_r(\eta).
\end{aligned}
\end{equation*}	
\end{theorem}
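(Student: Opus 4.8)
The statement is the classical G\"artner--Ellis theorem, so the plan is to reconstruct its proof for the scalar sequence $Y_{\mathbf N}:=S^{\mathbf p}_{N_1\times\cdots\times N_d}/(N_1\cdots N_d)$ with speed $a_{\mathbf N}:=N_1\cdots N_d$, using that the limit $(\ref{9})$ exists and is finite near $0$. First I would record the structural facts about $F_r$. Each finite-$\mathbf N$ cumulant generating function $\Lambda_{\mathbf N}(\beta):=\log\mathbb E_r(e^{\beta S^{\mathbf p}_{N_1\times\cdots\times N_d}})$ is convex in $\beta$ by H\"older's inequality; hence the pointwise limit $F_r(\beta)=\lim_{\mathbf N\to\infty}\Lambda_{\mathbf N}(\beta)/a_{\mathbf N}$ is convex, and finiteness on a neighborhood of $0$ makes it continuous there with a well-defined Fenchel--Legendre transform $F_r^\ast(x)=\sup_{\beta}(\beta x-F_r(\beta))$.

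For the upper bound I would apply the exponential Markov (Chebyshev) inequality separately to the two tails: for $\beta\ge0$, $\mathbb P_r(Y_{\mathbf N}\ge x-\varepsilon)\le e^{-\beta(x-\varepsilon)a_{\mathbf N}}\mathbb E_r(e^{\beta S^{\mathbf p}})$, and symmetrically for $\beta\le0$ on $\{Y_{\mathbf N}\le x+\varepsilon\}$. Taking $-a_{\mathbf N}^{-1}\log$, passing to the limit in $\mathbf N$ via $(\ref{9})$, optimizing over $\beta$ and letting $\varepsilon\to0$ yields $\lim_{\varepsilon\to 0}\liminf_{\mathbf N}-a_{\mathbf N}^{-1}\log\mathbb P_r(Y_{\mathbf N}\in[x-\varepsilon,x+\varepsilon])\ge F_r^\ast(x)$; this half needs no differentiability, and shows the defining limit $(\ref{8})$ is at least the transform.

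The matching lower bound is the heart of the argument and the step I expect to be the main obstacle. Carrying it out under the differentiability hypothesis (the case relevant here, which also delivers the ``moreover'' formula), fix $x$ and choose $\eta$ with $F_r'(\eta)=x$, and introduce the tilted laws $\widetilde{\mathbb P}_{\mathbf N}$ with density $e^{\eta S^{\mathbf p}}/\mathbb E_r(e^{\eta S^{\mathbf p}})$ relative to $\mathbb P_r$. Reversing the tilt on the event $\{Y_{\mathbf N}\in[x-\varepsilon,x+\varepsilon]\}$ gives $\mathbb P_r(Y_{\mathbf N}\in[x-\varepsilon,x+\varepsilon])\ge \mathbb E_r(e^{\eta S^{\mathbf p}})\,e^{-\eta a_{\mathbf N}(x+\varepsilon)}\,\widetilde{\mathbb P}_{\mathbf N}(Y_{\mathbf N}\in[x-\varepsilon,x+\varepsilon])$, with the sign of $\eta$ dictating which endpoint is used, so that $-a_{\mathbf N}^{-1}\log\mathbb P_r\le -a_{\mathbf N}^{-1}\log\mathbb E_r(e^{\eta S^{\mathbf p}})+\eta(x+\varepsilon)-a_{\mathbf N}^{-1}\log\widetilde{\mathbb P}_{\mathbf N}(Y_{\mathbf N}\in[x-\varepsilon,x+\varepsilon])$. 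The crux is to show $\widetilde{\mathbb P}_{\mathbf N}(Y_{\mathbf N}\in[x-\varepsilon,x+\varepsilon])\to1$: the tilted cumulant generating function is $\widetilde\Lambda_{\mathbf N}(\beta)=\Lambda_{\mathbf N}(\beta+\eta)-\Lambda_{\mathbf N}(\eta)$, whose normalized limit $F_r(\beta+\eta)-F_r(\eta)$ is differentiable at $\beta=0$ with derivative $F_r'(\eta)=x$; a Chebyshev estimate on the tilted exponential moments then forces $Y_{\mathbf N}$ to concentrate at $x$ under $\widetilde{\mathbb P}_{\mathbf N}$. Feeding this back and letting $\mathbf N\to\infty$ then $\varepsilon\to0$ produces $\lim_{\varepsilon\to 0}\limsup_{\mathbf N}-a_{\mathbf N}^{-1}\log\mathbb P_r\le \eta x-F_r(\eta)$.

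Finally I would identify the two bounds. By convexity and differentiability of $F_r$ at $\eta$, the first-order condition $x-F_r'(\eta)=0$ shows the supremum defining $F_r^\ast(x)$ is attained at $\beta=\eta$, so $F_r^\ast(x)=\eta x-F_r(\eta)$; this is exactly the convex-duality fact quoted as Lemma 2.2.31 of \cite{dembo2009ldp}. Combining with the upper and lower bounds gives that the limit $(\ref{8})$ exists and equals both $\sup_\beta(\beta x-F_r(\beta))$ and $\eta x-F_r(\eta)$, which is the assertion. The only genuinely delicate point is the tilted law of large numbers; everything else is either H\"older convexity, the Markov inequality, or elementary convex analysis.
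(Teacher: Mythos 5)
This statement is not proved in the paper at all: it is quoted (in slightly simplified form) from Dembo and Zeitouni \cite{dembo2009ldp}, Theorem 2.3.6, and used as a black box, so there is no in-paper argument to compare yours against. Your reconstruction follows the standard proof from that reference: H\"older convexity of the normalized cumulant generating functions, the exponential Chebyshev bound applied to the two tails to get $\lim_{\varepsilon\to0}\liminf_{\mathbf{N}}-a_{\mathbf{N}}^{-1}\log\mathbb{P}_r\geq F_r^{\ast}(x)$ with $a_{\mathbf{N}}=N_1\cdots N_d$, the exponentially tilted measure together with a tilted law of large numbers for the matching bound in the other direction, and the convex-duality identification $F_r^{\ast}(x)=\eta x-F_r(\eta)$ when $F_r'(\eta)=x$ (Lemma 2.2.31 of \cite{dembo2009ldp}). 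The outline is correct, including the key observation that differentiability of $F_r$ at $\eta$ alone forces the tilted rate function to be strictly positive away from $x$ (this is precisely the exposed-point argument in \cite{dembo2009ldp}); the passage to the multi-index limit $\mathbf{N}\to\infty$ changes nothing, since your proof only uses convergence of $\Lambda_{\mathbf{N}}/a_{\mathbf{N}}$ along the net. One caveat, which you implicitly concede by proving the lower bound only under differentiability: your argument establishes $I_r(x)=\sup_{\beta}(\beta x-F_r(\beta))$ only at points $x=F_r'(\eta)$ in the range of the derivative, whereas the theorem's first sentence, hypothesizing merely finiteness near the origin, is itself a loose paraphrase of G\"artner--Ellis (the general statement requires essential smoothness, or restricts the lower bound to exposed points). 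Since the paper only ever invokes the theorem after establishing that the relevant free energy is differentiable on all of $\mathbb{R}$ (Theorems \ref{theorem:3.1}, \ref{theorem:4.1} and \ref{theorem bc1}), your version covers every application actually made of it.
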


\begin{theorem}\label{theorem:3.1}
	For any $d\geq 1$ and $p_1,p_2,...,p_d\geq 1$ with $\gcd(p_i,p_j)=1$ for $1\leq i< j\leq d$. Then
	\item[1.]The explicit formula of the free energy function associated to multiple sum $S_{N_1\times N_2\times \cdots \times N_d}^{{\bf p}}$ is
		\begin{equation}\label{F_r(beta)}
		\begin{aligned}
			F_r(\beta)
			=\frac{2p_1p_2\cdots p_d-1}{2p_1p_2\cdots p_d}\log (r(1-r))+\frac{p_1p_2\cdots p_d-1}{p_1p_2\cdots p_d}\log|v^T\cdot e_+|^2 +\log\Lambda_++\mathcal{G}(\beta),
		\end{aligned}
	\end{equation}
	where $\Lambda_{\pm}$, $v^T$, $h$, $e_+$ and $\mathcal{G}(\beta)$ defined in {\rm (\ref{lambda+-}), (\ref{vT}), (\ref{h}), (\ref{e+})} and {\rm(\ref{Gbeta})} respectively.
	\item[2.]The function $F_r(\beta)$ is differentiable with respect to $\beta \in \mathbb{R}$.\\
	\item[3.]The multiple average {\rm (\ref{multiple sum average})} satisfies a LDP with the rate function
	\begin{equation*}
	\begin{aligned}
		I_{r}(x)=&\sup_{\beta \in \mathbb{R}}\left( \beta x-F_r(\beta) \right).
	\end{aligned}
\end{equation*}
Furthermore, if $(F_r)'(\eta)=y$, then $I_{r}(y)=\eta y- F_r(\eta)$. 
\end{theorem}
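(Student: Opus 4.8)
The plan is to use the orbit decomposition of Lemma~\ref{lemma:2.2} to rewrite the partition function $\mathbb{E}_r(\exp(\beta S^{\mathbf p}_{N_1\times\cdots\times N_d}))$ as a product of one-dimensional Ising partition functions---one for each multiplicative line $\mathcal M_{\mathbf p}(\mathbf i)$---and then to pass to the thermodynamic limit using the density estimates of Lemma~\ref{lemma:1.5}. By Lemma~\ref{lemma:2.2}, $\mathbb N^d=\coprod_{\mathbf i\in\mathcal I_{\mathbf p}}\mathcal M_{\mathbf p}(\mathbf i)$, so the family $\{\sigma_{\mathbf j}\}$ splits into independent blocks indexed by these lines (the coprimality hypothesis $\gcd(p_i,p_j)=1$ being in force throughout). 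On the line through a base point $\mathbf i\in\mathcal I_{\mathbf p}$ whose intersection with the box $\mathcal N_{N_1\times\cdots\times N_d}$ consists of exactly $\ell$ points $\mathbf i,\mathbf p\cdot\mathbf i,\dots,\mathbf p^{\ell-1}\cdot\mathbf i$ (here $\mathbf p^{k}\cdot\mathbf i=(i_1p_1^{k},\dots,i_dp_d^{k})$), the relevant summands of \eqref{14} are $\sigma_{\mathbf p^{k}\cdot\mathbf i}\sigma_{\mathbf p^{k+1}\cdot\mathbf i}$ for $k=0,\dots,\ell-1$, i.e.\ a linear Ising chain of $\ell$ bonds on $\ell+1$ spins (the last spin $\mathbf p^{\ell}\cdot\mathbf i$ lying just outside the box). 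Spins on distinct lines are independent, so the chains decouple.

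Since the $\sigma$'s are i.i.d.\ under $\mathbb{P}_r$ across distinct lines, the expectation factorizes, and grouping lines by their length through $\mathcal K_{N_1\times\cdots\times N_d;\ell}$ gives
\begin{equation*}
\frac{1}{N_1\cdots N_d}\log\mathbb{E}_r(e^{\beta S^{\mathbf p}})=\frac{1}{N_1\cdots N_d}\sum_{\ell\ge1}\bigl|\mathcal K_{N_1\times\cdots\times N_d;\ell}\bigr|\log Z_\ell,
\end{equation*}
where $Z_\ell=\mathbb{E}_r\exp\bigl(\beta\sum_{k=0}^{\ell-1}s_ks_{k+1}\bigr)$ is the one-chain partition function. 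I would evaluate $Z_\ell$ by the transfer-matrix method: with $v=(\sqrt r,\sqrt{1-r})^{T}$ and the symmetric matrix $M_{s,s'}=\sqrt{\mu(s)}\,e^{\beta ss'}\sqrt{\mu(s')}$ ($\mu(+1)=r$, $\mu(-1)=1-r$), one has $Z_\ell=v^{T}M^{\ell}v=\Lambda_+^{\ell}\lvert v^{T}e_+\rvert^{2}+\Lambda_-^{\ell}\lvert v^{T}e_-\rvert^{2}$ for the eigenpairs $(\Lambda_\pm,e_\pm)$ of $M$. Writing $P=p_1\cdots p_d$, Lemma~\ref{lemma:1.5}(3) permits interchanging $\lim_{\mathbf N\to\infty}$ with $\sum_\ell$, while parts (1)--(2) yield the limiting density $\kappa_\ell:=\lim_{\mathbf N\to\infty}\lvert\mathcal K_{N_1\times\cdots\times N_d;\ell}\rvert/(N_1\cdots N_d)=(P-1)^2/P^{\ell+1}$; hence $F_r(\beta)=\sum_{\ell\ge1}\kappa_\ell\log Z_\ell$. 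Splitting $\log Z_\ell=\ell\log\Lambda_++\log\lvert v^{T}e_+\rvert^{2}+\log\bigl(1+\tfrac{\lvert v^{T}e_-\rvert^{2}}{\lvert v^{T}e_+\rvert^{2}}(\Lambda_-/\Lambda_+)^{\ell}\bigr)$ and using $\sum_\ell\ell\kappa_\ell=1$ and $\sum_\ell\kappa_\ell=(P-1)/P$, the first two pieces give $\log\Lambda_+$ and $\tfrac{P-1}{P}\log\lvert v^{T}e_+\rvert^{2}$, and the remaining convergent series---after extracting the constant $\tfrac{2P-1}{2P}\log(r(1-r))$---is collected into $\mathcal G(\beta)$, producing \eqref{F_r(beta)}. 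This proves part~1.

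For part~2, note that for every finite $\beta$ the matrix $M$ has strictly positive entries, so Perron--Frobenius gives $\Lambda_+$ simple with $\Lambda_+>\lvert\Lambda_-\rvert$ and $v^{T}e_+>0$; analytic perturbation theory then makes $\Lambda_+(\beta)$, $e_+(\beta)$ and $\lvert v^{T}e_\pm\rvert^{2}$ real-analytic in $\beta$. On any compact $\beta$-interval $\lvert\Lambda_-/\Lambda_+\rvert\le\rho<1$ uniformly, so both the series for $\mathcal G$ and its term-by-term derivative converge uniformly; thus $F_r$ is differentiable (indeed smooth) on $\mathbb R$. Part~3 is then immediate from Theorem~\ref{theorem:GE}: $F_r$ exists and is finite for all $\beta$, in particular near the origin, and is differentiable, so the average \eqref{multiple sum average} satisfies the LDP with $I_r(x)=\sup_{\beta}(\beta x-F_r(\beta))$, and $I_r(y)=\eta y-F_r(\eta)$ whenever $(F_r)'(\eta)=y$.

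The main obstacle is the limit-and-summation step. One must rigorously justify passing $\mathbf N\to\infty$ through the infinite sum over chain lengths (this is exactly the content of Lemma~\ref{lemma:1.5}(3) via the Weierstrass M-test), must correctly account for the boundary spin lying outside the box in each chain---which is what makes the chain have $\ell$ bonds rather than $\ell-1$ and fixes the coefficient of $\log\Lambda_+$ as $1$---and must resum the correction series into the closed form $\mathcal G(\beta)$ while peeling off the $\log(r(1-r))$ contribution. By comparison the differentiability in part~2 and the invocation of the G\"{a}rtner--Ellis theorem in part~3 are routine once the simplicity of $\Lambda_+$ and the uniform bound $\lvert\Lambda_-/\Lambda_+\rvert<1$ are established.
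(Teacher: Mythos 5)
Your proposal is correct and follows essentially the same route as the paper: decompose $S^{\mathbf p}$ along the multiplicative lines of Lemma~\ref{lemma:2.2} into independent one-dimensional Ising chains ($\ell$ bonds on $\ell+1$ spins, last spin outside the box), evaluate each chain by transfer matrix, group by chain length with the density $\kappa_\ell=(p_1\cdots p_d-1)^2/(p_1\cdots p_d)^{\ell+1}$ from Lemma~\ref{lemma:1.5}(1)--(2), interchange limit and sum by Lemma~\ref{lemma:1.5}(3), and conclude via G\"artner--Ellis. Two local differences are worth noting, both benign. First, you use the symmetric transfer matrix $M_{s,s'}=\sqrt{\mu(s)}\,e^{\beta ss'}\sqrt{\mu(s')}$ with $v=(\sqrt r,\sqrt{1-r})^T$, whereas the paper extracts the prefactor $(r(1-r))^{(\ell+1)/2}$ and works with the field-$h$ partition function $Z(\beta,h,\ell+1)$; since your $M$ equals $(r(1-r))^{1/2}$ times the paper's matrix and your $v$ equals $(r(1-r))^{1/4}$ times theirs, the ratios $\Lambda_-/\Lambda_+$ and $|v^Te_-|^2/|v^Te_+|^2$ agree, and your extraction of the constant $\frac{2P-1}{2P}\log(r(1-r))$ reproduces \eqref{F_r(beta)} exactly (using, as you do, $\sum_\ell\kappa_\ell=(P-1)/P$ and $\sum_\ell\ell\kappa_\ell=1$, and $|v^Te_-|^2=\|v\|^2-|v^Te_+|^2$ for the form of $\mathcal G$). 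Second, for part~2 the paper differentiates the summands of $\mathcal G$ explicitly and must handle a removable singularity of its formula for $(\Lambda_-/\Lambda_+)'$ at $\beta=0$ via L'H\^opital; your appeal to Perron--Frobenius (simplicity of $\Lambda_+$, $\Lambda_+>|\Lambda_-|$, $v^Te_+>0$) plus analytic perturbation of the simple eigenpairs sidesteps that delicacy and gives real-analyticity of each term directly, after which the M-test with the weights $\kappa_\ell$ (the uniform bound $|\Lambda_-/\Lambda_+|\le\rho<1$ on compacts is true but not even needed, given the geometric decay of $\kappa_\ell$ against the at-most-linear-in-$\ell$ derivative bounds) yields differentiability of $F_r$ just as in the paper.
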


\begin{proof}
	\item[\bf 1.]By Lemma \ref{lemma:2.2}, we decompose the sum (\ref{14}) as
	\begin{equation}\label{106}
		\begin{aligned}
			S_{N_1\times N_2\times \cdots \times N_d}^{{\bf p}}
			&=\sum_{{\bf i}\in \mathcal{I}_{{\bf p}}}\left(\sum_{{\bf x}\in \mathcal{L}_{N_1\times N_2\times \cdots \times N_d}({\bf i})}\sigma_{{\bf x}}\sigma_{{\bf p}\cdot {\bf x}}    \right)     .                   
		\end{aligned}
	\end{equation}
	For a given ${\bf i}\in \mathcal{I}_{{\bf p}}$, each subsystem $\mathcal{L}_{N_1\times N_2\times \cdots \times N_d}({\bf i})$ is nothing else than the Hamiltonian of a one-dimensional nearest-neighbors Ising model, since
	\begin{equation*}
		\begin{aligned}
			\left\{\sum_{{\bf x}\in \mathcal{L}_{N_1\times N_2\times \cdots \times N_d}({\bf i})}\sigma_{{\bf x}}\sigma_{{\bf p}\cdot {\bf x}} \right\} \overset{\mathcal{D}}{=}\left\{   \sum_{\ell=1}^{|\mathcal{L}_{N_1\times N_2\times \cdots \times N_d}({\bf i})|}   \tau^{({\bf i})}_\ell \tau^{({\bf i})}_{\ell+1} \right\},
		\end{aligned}
	\end{equation*}
where $\tau^{({\bf i})}_\ell$ are Bernoulli with parameter $r$, independent for different values of ${\bf i}$ and for different values of $\ell$ and $\overset{\mathcal{D}}{=}$ denotes equality in distribution. We introduce the notation $Z(\beta,h,\ell+1)$ as \cite{carinci2012nonconventional} by
	\begin{equation}\label{100}
		\begin{aligned}
			Z(\beta,h,\ell+1)=\sum_{\tau \in \{-1,1\}^{\ell+1}}e^{\beta \sum_{i=1}^\ell\tau_i \tau_{i+1}+h\sum_{i=1}^{\ell+1}\tau_i}
		\end{aligned}
	\end{equation}
	for the partition function of the one-dimensional Ising model with coupling strength $\beta$ and external field $h$ in the volume $\{ 1,...,\ell \}$, with free boundary conditions. Then 
	\begin{equation}
		\begin{aligned}\label{101}
			\mathbb{E}_r\left(e^{\beta \sum_{i=1}^\ell\tau_i \tau_{i+1} }
			\right)=\left(r(1-r) \right)^{\frac{\ell+1}{2}}Z(\beta,h,\ell+1),
		\end{aligned}
	\end{equation}
where 
	\begin{equation}\label{h}
\begin{aligned}
h=\frac{1}{2}\log \left(\frac{r}{1-r} \right).
 \end{aligned}
 \end{equation}	
By the computation in (\cite{RJB-1982}, Chapter 2), (\ref{100}) becomes to
	\begin{equation}\label{102}
		\begin{aligned}
			Z(\beta,h,\ell+1)=&
			v^T\left[
			\begin{matrix}
				e^{\beta+h} & e^{-\beta}  \\
				e^{-\beta}& e^{\beta-h} 				
			\end{matrix}
			\right]^{\ell}v
			=|v^T\cdot e_+|^2 \Lambda_+^\ell+|v^T\cdot e_-|^2 \Lambda_-^\ell ,
		\end{aligned}
\end{equation}			
where 
\begin{equation}\label{vT}
\begin{aligned}	
v^T=\left(e^{\frac{h}{2}},e^{\frac{-h}{2}}\right)
\end{aligned}
\end{equation}
and
\begin{equation}\label{lambda+-}
\begin{aligned}	
\Lambda_{\pm}=e^\beta\left(\cosh(h)\pm\sqrt{\sinh^2(h)+e^{-4\beta}}\right)
\end{aligned}
\end{equation}
is the largest, resp. smallest eigenvalue of transition matrix $\left[
\begin{matrix}
e^{\beta+h} & e^{-\beta}  \\
e^{-\beta}& e^{\beta-h} 
\end{matrix}
\right]$ with $e_{\pm}$ the normalized eigenvectors corresponding to the eigenvalues $\Lambda_{\pm}$.		
			
By choosing the special normalized eigenvector $e_+$ (corr. to $\Lambda_+$),
\begin{equation}\label{e+}
\begin{aligned}	
e_+=\frac{w_+}{|| w_+ ||} \mbox{ with } 
w_+=\left( \begin{matrix}
-e^{-\beta}  \\
e^{h+\beta}-\Lambda_+
\end{matrix} \right).
\end{aligned}
\end{equation}
 Then (\ref{102}) and (\ref{e+}) give
	\begin{equation}\label{105}
	\begin{aligned}						
		Z(\beta,h,\ell+1)=&|v^T\cdot e_+|^2 \Lambda_+^\ell+(||v||^2-|v^T\cdot e_+|^2) \Lambda_-^\ell\\
		=&|v^T\cdot e_+|^2 \Lambda_+^\ell+(2\cosh(h)-|v^T\cdot e_+|^2) \Lambda_-^\ell.
	\end{aligned}
\end{equation}
Then by (\ref{106}) and (\ref{101}), (\ref{9}) becomes to
\begin{equation}\label{107}
	\begin{aligned}
		F_r(\beta)=&\lim_{{\bf N}\rightarrow\infty}\frac{1}{N_1\cdots N_d}\sum_{{\bf i}\in \mathcal{I}_{\bf p}}\log (r(1-r))^{\frac{|\mathcal{L}_{N_1\times N_2\times \cdots \times N_d}({\bf i})|+1}{2}}Z(\beta,h,|\mathcal{L}_{N_1\times N_2\times \cdots \times N_d}({\bf i})|+1).\\
	\end{aligned}
\end{equation}
Then by Lemma \ref{lemma:2.2}, Lemma \ref{lemma:1.5} and (\ref{107}), we have
	\begin{equation}\label{108}
		\begin{aligned}
		F_r(\beta)&=\sum_{\ell=1}^{\infty}\frac{(p_1p_2\cdots p_d-1)^2}{(p_1p_2\cdots p_d)^{\ell+1}}\log (r(1-r))^{\frac{\ell+1}{2}}Z(\beta,h,\ell+1).
					\end{aligned}
		\end{equation}
Combining (\ref{105}) and (\ref{108}) yields			
			\begin{equation*}
			\begin{aligned}
				F_r(\beta)&=\sum_{\ell=1}^{\infty}\frac{(p_1p_2\cdots p_d-1)^2}{(p_1p_2\cdots p_d)^{\ell+1}}\log (r(1-r))^{\frac{\ell+1}{2}}\\
			&+\sum_{\ell=1}^{\infty}\frac{(p_1p_2\cdots p_d-1)^2}{(p_1p_2\cdots p_d)^{\ell+1}}\log\left(|v^T\cdot e_+|^2 \Lambda_+^\ell+(2\cosh(h)-|v^T\cdot e_+|^2) \Lambda_-^\ell\right)\\
			&=\frac{2p_1p_2\cdots p_d-1}{2p_1p_2\cdots p_d}\log (r(1-r))\\
			&+\sum_{\ell=1}^{\infty}\frac{(p_1p_2\cdots p_d-1)^2}{(p_1p_2\cdots p_d)^{\ell+1}}\log|v^T\cdot e_+|^2 +\sum_{\ell=1}^{\infty}\frac{(p_1p_2\cdots p_d-1)^2}{(p_1p_2\cdots p_d)^{\ell+1}}\ell\log\Lambda_++\mathcal{G}(\beta)\\
			&=\frac{2p_1p_2\cdots p_d-1}{2p_1p_2\cdots p_d}\log (r(1-r))+\frac{p_1p_2\cdots p_d-1}{p_1p_2\cdots p_d}\log|v^T\cdot e_+|^2 +\log\Lambda_++\mathcal{G}(\beta),
		\end{aligned}
	\end{equation*}
where
\begin{equation}\label{Gbeta}
\begin{aligned}
\mathcal{G}(\beta)=\sum_{\ell=1}^{\infty}\frac{(p_1p_2\cdots p_d-1)^2}{(p_1p_2\cdots p_d)^{\ell+1}}\log\left(1+\left( \frac{2\cosh(h)}{|v^T\cdot e_+|^2}-1 \right)\left(\frac{\Lambda_-}{\Lambda_+} \right)^\ell\right).
\end{aligned}
\end{equation}
\item[\bf 2.]For the proof of differentiability of $\beta\mapsto F_r(\beta)$, it is enough to show the sum
		\begin{equation*}
		\begin{aligned}
			\sum_{\ell=1}^{\infty}\frac{(p_1p_2\cdots p_d-1)^2}{(p_1p_2\cdots p_d)^{\ell+1}} \left[\log\left(1+\left( \frac{2\cosh(h)}{|v^T\cdot e_+|^2}-1 \right)\left(\frac{\Lambda_-}{\Lambda_+} \right)^\ell\right)\right]'
		\end{aligned}
\end{equation*}
is converge uniformly with respect to $\beta \in \mathbb{R}$. Indeed, for all $\ell \geq 1$
			\begin{equation}\label{diff1}
		\begin{aligned}
 \left[\log\left(1+\left( \frac{2\cosh(h)}{|v^T\cdot e_+|^2}-1 \right)\left(\frac{\Lambda_-}{\Lambda_+} \right)^\ell\right)\right]'=\frac{\left[\frac{2\cosh(h)}{(v^T\cdot e_+)^2}\right]'\left(\frac{\Lambda_-}{\Lambda_+} \right)^\ell+\left[\frac{2\cosh(h)}{(v^T\cdot e_+)^2}-1\right]\ell \left(\frac{\Lambda_-}{\Lambda_+} \right)^{\ell-1}\left(\frac{\Lambda_-}{\Lambda_+} \right)'}{1+\left( \frac{2\cosh(h)}{(v^T\cdot e_+)^2}-1 \right)\left(\frac{\Lambda_-}{\Lambda_+} \right)^\ell}.
		\end{aligned}
	\end{equation}
Note that, for all $\ell \geq 1$	
\begin{equation}\label{diff2}
\begin{aligned}
0\leq\left|\frac{\Lambda_-}{\Lambda_+} \right|^\ell\leq 1.
\end{aligned}
\end{equation}	
Then (\ref{diff1}) and (\ref{diff2}) give
			\begin{equation}\label{diff3}
\begin{aligned}
\left|\left[\log\left(1+\left( \frac{2\cosh(h)}{|v^T\cdot e_+|^2}-1 \right)\left(\frac{\Lambda_-}{\Lambda_+} \right)^\ell\right)\right]'\right| \leq \left| \left[\frac{2\cosh(h)}{(v^T\cdot e_+)^2}\right]' \right|+\left[\frac{2\cosh(h)}{(v^T\cdot e_+)^2}-1\right]\ell \left| \left(\frac{\Lambda_-}{\Lambda_+} \right)' \right|.
\end{aligned}
\end{equation}
For any bounded closed interval $[a,b]\subseteq \mathbb{R}$, there exists a postive constant $M_{a,b}$ such that
			\begin{equation}\label{diff4}
\begin{aligned}
 0= \left[\frac{2\cosh(h)}{|v|^2}-1\right]\leq\left[\frac{2\cosh(h)}{(v^T\cdot e_+)^2}-1\right]\leq M_{a,b}.
\end{aligned}
\end{equation}
It remains to check that $\left| \left[\frac{2\cosh(h)}{(v^T\cdot e_+)^2}\right]' \right|$ and $\left| \left(\frac{\Lambda_-}{\Lambda_+} \right)' \right|$ are bounded on $[a,b]$.

Direct computation infers that
\begin{equation}\label{diff5}
\begin{aligned} 
\left(\frac{\Lambda_-}{\Lambda_+} \right)' =\frac{(-4e^{-4\beta})\left[ \cosh(h)-f(\beta) \right] \left[ \frac{1-e^{-4\beta}}{f(\beta)}-\cosh(h)+f(\beta)\right]}{(1-e^{-4\beta})^2},
\end{aligned}
\end{equation}
where $f(\beta)=\sqrt{\sinh^2(h)+e^{-4\beta}}$. For the boundedness of (\ref{diff5}), we apply the fact that $f(\beta)$ is bounded away from zero on the interval $[a,b]$, thus
\begin{equation}\label{diff8}
\begin{aligned} 
\left| \left(\frac{\Lambda_-}{\Lambda_+} \right)' \right|\leq K_{a,b},
\end{aligned}
\end{equation}
for some constant $K_{a,b}$. Note that the L'H\^{o}pital's rule is applied to definition of derivative and (\ref{diff8}) when $\beta=0$ and $\beta$ near 0 respectively. Namely, for $\beta=0$
\begin{equation}\label{beta=0,1}
\begin{aligned} 
\left(\frac{\Lambda_-}{\Lambda_+} \right)'(0)&=\lim_{\epsilon\rightarrow 0}\frac{\frac{\Lambda_-}{\Lambda_+}(\epsilon)-\frac{\Lambda_-}{\Lambda_+}(0)}{\epsilon}\\
&=\lim_{\epsilon\rightarrow 0}\frac{\cosh(h)-f(\beta)}{\epsilon \left( \cosh(h)+f(\beta) \right)}.
\end{aligned}
\end{equation}
Applying L'H\^{o}pital's rule to (\ref{beta=0,1}), we obtain
 \begin{equation}\label{beta=0,2}
 \begin{aligned} 
 \left| \left(\frac{\Lambda_-}{\Lambda_+} \right)'(0)\right|=\frac{1}{\cosh^2(h)}\leq 1.
 \end{aligned}
 \end{equation}
For $|\beta| \rightarrow 0^+ $, applying L'H\^{o}pital's rule twice to (\ref{diff5}), we have
 \begin{equation}\label{beta=0,3}
\begin{aligned} 
\left| \left(\frac{\Lambda_-}{\Lambda_+} \right)'(0^+)\right|=\frac{1}{\cosh^2(h)}.
\end{aligned}
\end{equation}
Then (\ref{beta=0,2}) and (\ref{beta=0,3}) imply $\left| \left(\frac{\Lambda_-}{\Lambda_+} \right)'\right|$ is a continuous function near $\beta=0$, which obtain the boundedness when $\beta$ near 0.

It is easy to check that
\begin{equation}\label{diff6}
\begin{aligned} 
e^{h-2\beta}>0 \mbox { and } \sinh(h)-\sqrt{\sinh^2(h)+e^{-4\beta}}<0,\mbox{ for all }\beta\in [a,b].
\end{aligned}
\end{equation}

Then (\ref{diff6}) gives 
\begin{equation}\label{diff9}
\begin{aligned} 
v^T\cdot e_+\neq 0, \mbox{ on }[a,b].
\end{aligned}
\end{equation}

Combining (\ref{diff9}) and the fact that $0<C<f(\beta)$ for some $C$, we have
\begin{equation}\label{diff7}
\begin{aligned} 
\left| \left[\frac{2\cosh(h)}{(v^T\cdot e_+)^2}\right]' \right|\leq N_{a,b},
\end{aligned}
\end{equation}
for some constant $N_{a,b}$.

The uniform convergence of $\beta \mapsto F_r(\beta)$ on $[a,b]$ is thus obtained by the Weierstrass M-test and (\ref{diff3}), (\ref{diff8}) and (\ref{diff7}). Since $a$ and $b$ are arbitrary, we obtain the differentiability of $F_r(\beta)$ on $\mathbb{R}$. 
	\item[\bf 3.]Theorem \ref{theorem:3.1} (3) follows from Theorem \ref{theorem:GE} and (1), (2) of Theorem \ref{theorem:3.1}.
\end{proof}

\begin{remark}
We note that when $r=\frac{1}{2}$, we have
\begin{equation*}
	\begin{aligned}
		I_{\frac{1}{2}}(y)=& \eta \frac{e^{\eta }-e^{-\eta }}{e^{\eta }+e^{-\eta }}-\log(e^{\eta }+e^{-\eta })+\log 2,
	\end{aligned}
\end{equation*}
where 
\begin{equation*}
	\begin{aligned}
		y=\frac{e^{\eta }-e^{-\eta }}{e^{\eta }+e^{-\eta }}.
	\end{aligned}
\end{equation*}
Thus, $I_{\frac{1}{2}}$ is independent on the dimension $d\in \mathbb{N}$ and the multiple constraint vector ${\bf p}=(p_1,...,p_d)\in \mathbb{N}^d$.
\end{remark}

\begin{corollary}\label{E}
For any $p_1,p_2\geq 1$ with $\gcd(p_1,p_2)=1$, 
\item[1.]The free energy function associated to sum $S_{N_1\times N_2}^{{\bf p}}$ is
\begin{equation*}
	\begin{aligned}
		F_r(\beta)=\frac{2p_1p_2-1}{2p_1p_2}\log (r(1-r))+\frac{p_1p_2-1}{p_1p_2}\log|v^T\cdot e_+|^2 +\log\Lambda_++\mathcal{G}(\beta),\\
	\end{aligned}
\end{equation*}
where 
\begin{equation*}
	\begin{aligned}
		\mathcal{G}(\beta)=\sum_{\ell=1}^{\infty}\frac{(p_1p_2-1)^2}{(p_1p_2)^{\ell+1}}\log\left(1+\left( \frac{2\cosh(h)}{|v^T\cdot e_+|^2}-1 \right)\left(\frac{\Lambda_-}{\Lambda_+} \right)^\ell\right).\\
	\end{aligned}
\end{equation*}
\item[2.]In addition, if we let ${\bf p}=(2,1)$, then the free energy function associated to sum
\begin{equation*}
\begin{aligned}
S_{N_1\times N_2}^{(2,1)}=\sum_{i_{1}=1}^{N_{1}}\sum_{i_{2}=1}^{N_{2}}\sigma _{(i_1,i_2)}\sigma _{(2i_1,i_2)}
	\end{aligned}
\end{equation*}
is 
\begin{equation}\label{Carinci 2012}
	\begin{aligned}
		F_r(\beta)=\log\left( (r(1-r))^{\frac{3}{4}} |v^T\cdot e_+| \Lambda_+\right) +\mathcal{G}(\beta),\\
	\end{aligned}
\end{equation}
where 	
\begin{equation*}
	\begin{aligned}
		\mathcal{G}(\beta)=\frac{1}{2}\sum_{\ell=1}^{\infty}\frac{1}{2^{\ell}}\log\left(1+\left( \frac{2\cosh(h)}{|v^T\cdot e_+|^2}-1 \right)\left(\frac{\Lambda_-}{\Lambda_+} \right)^\ell\right).\\
	\end{aligned}
\end{equation*}
\end{corollary}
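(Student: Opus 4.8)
The plan is to obtain both assertions as direct specializations of Theorem \ref{theorem:3.1}, which already supplies the free energy function for arbitrary dimension $d$ under the coprimality hypothesis on $p_1, \ldots, p_d$. No new analytic input is required; the work is purely algebraic bookkeeping, and in particular the differentiability and the LDP are inherited automatically.

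For the first assertion I would set $d = 2$ in the formula (\ref{F_r(beta)}) and in the definition (\ref{Gbeta}) of $\mathcal{G}(\beta)$. Under $d = 2$ the product $p_1 p_2 \cdots p_d$ collapses to $p_1 p_2$, so that $\frac{2 p_1 p_2 \cdots p_d - 1}{2 p_1 p_2 \cdots p_d}$ becomes $\frac{2 p_1 p_2 - 1}{2 p_1 p_2}$ and $\frac{p_1 p_2 \cdots p_d - 1}{p_1 p_2 \cdots p_d}$ becomes $\frac{p_1 p_2 - 1}{p_1 p_2}$, while the coefficient $\frac{(p_1 p_2 \cdots p_d - 1)^2}{(p_1 p_2 \cdots p_d)^{\ell+1}}$ appearing in (\ref{Gbeta}) becomes $\frac{(p_1 p_2 - 1)^2}{(p_1 p_2)^{\ell+1}}$. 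The hypothesis $\gcd(p_1, p_2) = 1$ is exactly the coprimality assumption needed to invoke Theorem \ref{theorem:3.1}, so the stated formula follows verbatim.

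For the second assertion I would substitute ${\bf p} = (2,1)$ into the formula just obtained, whence $p_1 p_2 = 2$. The key observation is that all three dimension-dependent coefficients collapse to clean numerical constants: $\frac{2 p_1 p_2 - 1}{2 p_1 p_2} = \frac{3}{4}$, $\frac{p_1 p_2 - 1}{p_1 p_2} = \frac{1}{2}$, and $(p_1 p_2 - 1)^2 = 1$. Using $\frac{1}{2}\log |v^T \cdot e_+|^2 = \log |v^T \cdot e_+|$ and combining the three logarithmic terms into a single product, the leading part becomes $\log\left((r(1-r))^{\frac{3}{4}} |v^T \cdot e_+| \Lambda_+\right)$, which is (\ref{Carinci 2012}). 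Likewise $\frac{(p_1 p_2 - 1)^2}{(p_1 p_2)^{\ell+1}} = \frac{1}{2^{\ell+1}} = \frac{1}{2} \cdot \frac{1}{2^\ell}$, yielding the claimed expression for $\mathcal{G}(\beta)$ and recovering the one-dimensional formula of \cite{carinci2012nonconventional}.

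There is no genuine obstacle here, since the corollary is an immediate consequence of Theorem \ref{theorem:3.1}. The only point worth flagging is a consistency check on the hypotheses: the decomposition Lemmas \ref{lemma:2.2} and \ref{lemma:1.5} are stated for $p_k \geq 2$, whereas the second part of the corollary uses the degenerate value $p_2 = 1$. This is legitimate because Theorem \ref{theorem:3.1} is itself stated for $p_1, \ldots, p_d \geq 1$; concretely, with $p_2 = 1$ the pairing $\sigma_{(i_1,i_2)}\sigma_{(2i_1,i_2)}$ acts only on the first coordinate and the index $i_2$ merely labels independent copies of the one-dimensional chain, so the formula reduces to the Carinci et al. case exactly as expected. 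Beyond this sanity check the derivation is routine substitution.
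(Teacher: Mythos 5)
Your proposal is correct and takes essentially the same approach as the paper, which states Corollary \ref{E} without any proof as an immediate specialization of Theorem \ref{theorem:3.1}: set $d=2$ for part 1, then substitute ${\bf p}=(2,1)$ for part 2, and your arithmetic ($\frac{2p_1p_2-1}{2p_1p_2}=\frac{3}{4}$, $\frac{p_1p_2-1}{p_1p_2}=\frac{1}{2}$, $(p_1p_2-1)^2=1$, $(p_1p_2)^{\ell+1}=2^{\ell+1}$, and $\frac{1}{2}\log|v^T\cdot e_+|^2=\log|v^T\cdot e_+|$) is exactly right. Your additional sanity check on the degenerate value $p_2=1$ --- noting that Theorem \ref{theorem:3.1} is stated for $p_k\geq 1$ even though Lemma \ref{lemma:2.2} assumes $p_k\geq 2$, and that the lattice decomposition then collapses to independent one-dimensional chains indexed by $i_2$ --- is a legitimate point the paper leaves implicit, but it does not change the argument.
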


\begin{remark}
\item[1.]Formula (\ref{Carinci 2012}) is obtained in \cite{carinci2012nonconventional} for the multiple sum (\ref{7}). Therefore, (\ref{F_r(beta)}) is a multidimensional version of the free energy function on $\mathbb{N}^d$.
\item[2.]When $r=1/2$, we have $h=0$, $\Lambda_+=e^\beta+e^{-\beta}$ and $|v^T\cdot e_+|^2=|| v ||^2=2$. This implies $\mathcal{G}(\beta)=0$ and 
\begin{equation}\label{F1/2}
	\begin{aligned}
		F_{\frac{1}{2}}(\beta)=\log \left(\frac{1}{2}(e^\beta+e^{-\beta}) \right).	
	\end{aligned}
\end{equation}
Thus, $F_{\frac{1}{2}}$ is independent on the dimension $d\in \mathbb{N}$ and the choice of ${\bf p}\in \mathbb{N}^d$.
\item[3.]When $r\neq 1/2$, we have $h=\frac{1}{2}\log(r/1-r)\neq 0$. Then for all $\beta \in \mathbb{R}$, the vector $v$ is not parallel to the vector $w_+$ and so is $e_+$, that gives 
\begin{equation*}
\begin{aligned}
|v^T\cdot e_+|^2<|| v ||^2 || e_+ ||^2 = (r(1-r))^{-\frac{1}{2}}.
	\end{aligned}
\end{equation*}
This gives the effect of multiple constraint.
\item[4.]Figures \ref{fig1} and \ref{fig2} illustrate the free energy functions for different $r\in (0,1)$ which obtained from Theorem \ref{theorem:3.1} truncating the sum to the first 100 terms. Figure \ref{fig1} is a case for $d=2$ with $p_1=2$ and $p_2=1$. In fact, it becomes to one-dimensional result in \cite{carinci2012nonconventional}. The figure \ref{fig2} is a multidimensional case for $d=5$ with $p_1=2,p_2=3,p_3=5,p_4=7$ and $p_5=11$.
\begin{figure}[h]
	\includegraphics[scale=0.45]{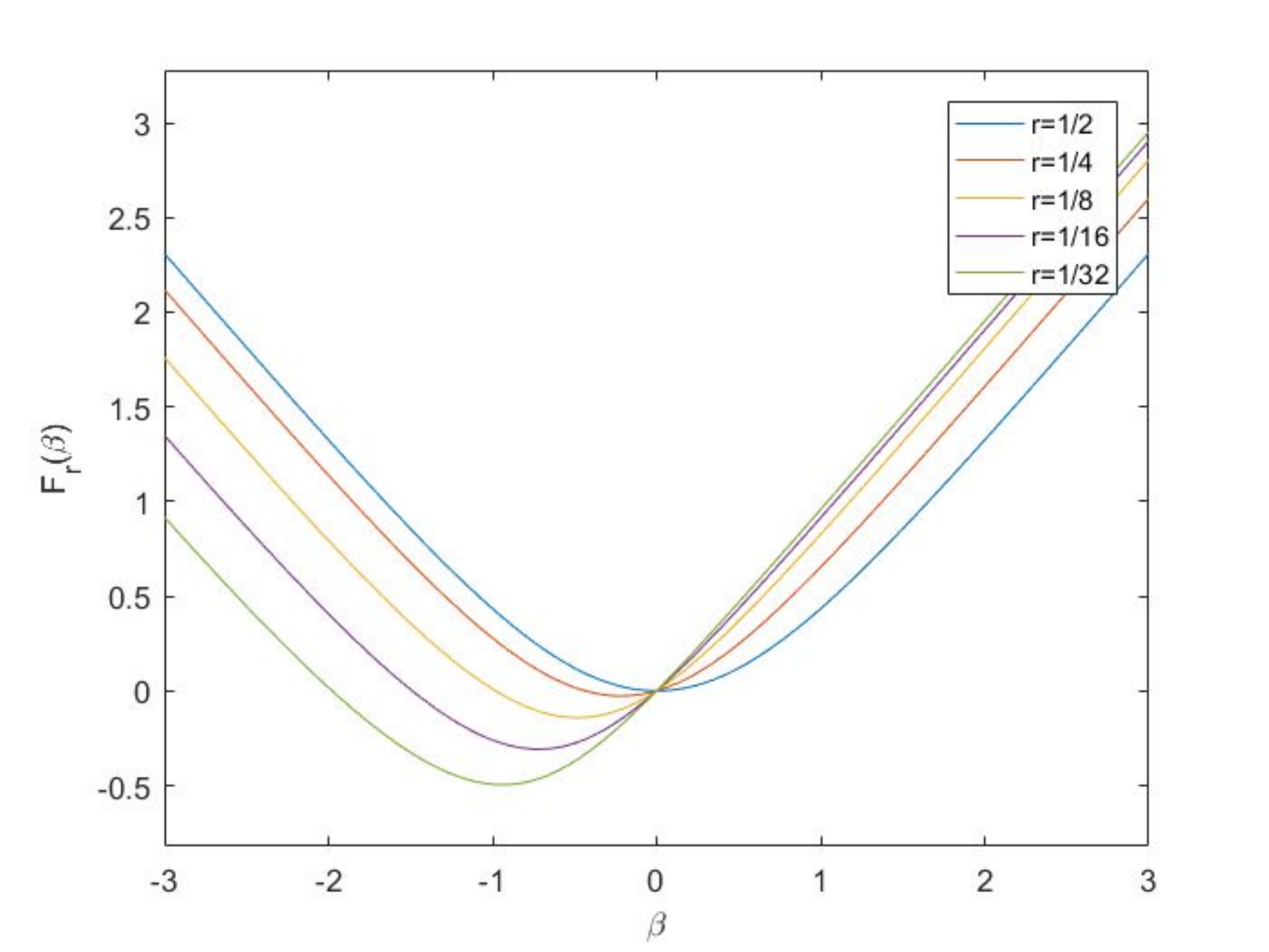}
	\caption{}
	\label{fig1}
\end{figure}
\begin{figure}[h]
		\includegraphics[scale=0.45]{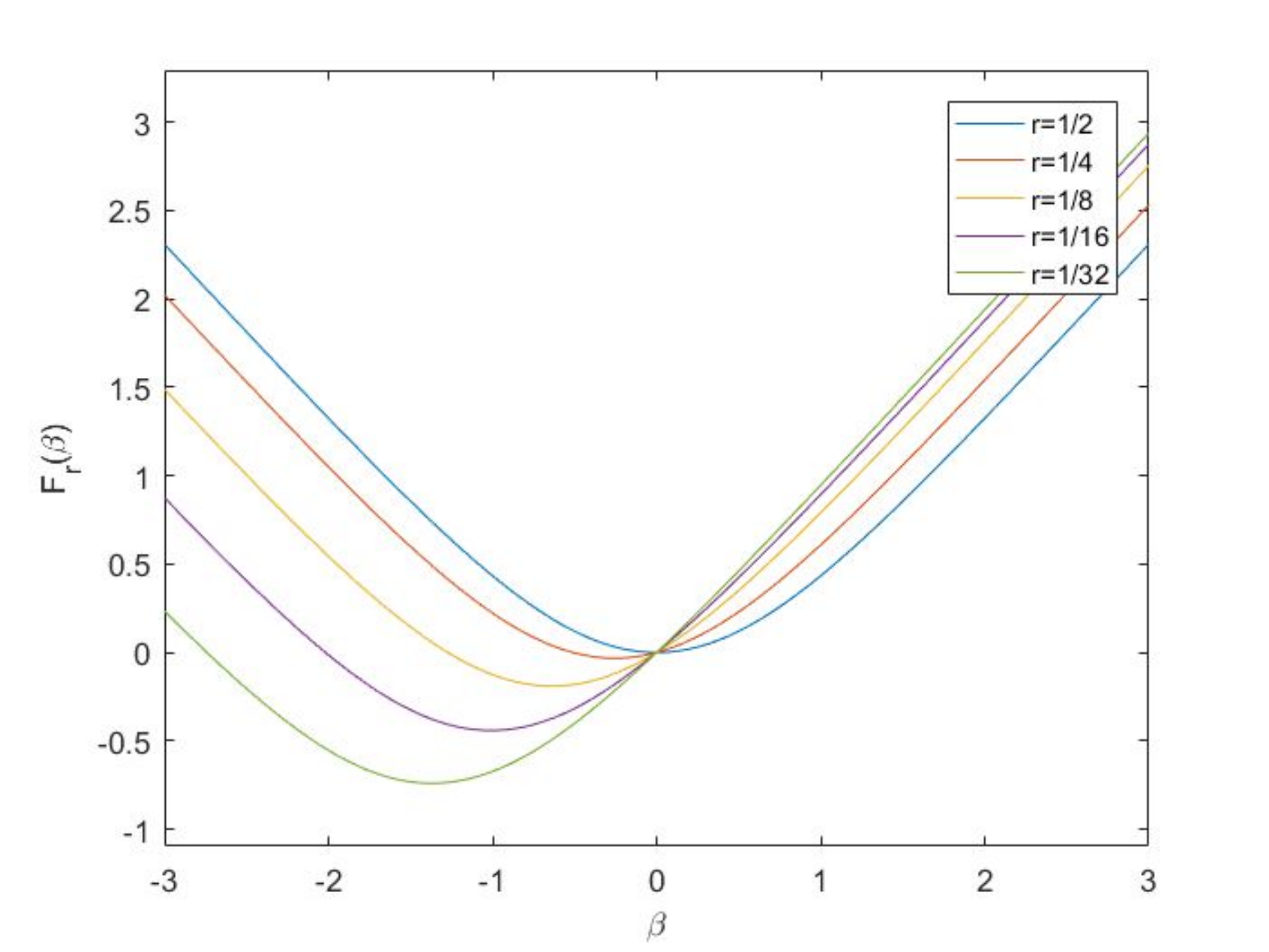}
	\caption{}
	\label{fig2}
\end{figure}
\end{remark}

\section{LDP of weighted multiple averages on $\mathbb{N}^d$}

Let $(X,T)$ be a topological dynamical system. Fan \cite{fan2021multifractal} studies the multifractal analysis of the following \emph{weighted sum}
 \begin{equation*}
 \begin{aligned}
S^{(w)}_N f(x)=\sum_{n=1}^{N} w_n f_n(T^n x)=\sum_{n=1}^{N} w_n f_n(x_n,x_{n+1},...),
	\end{aligned}
\end{equation*}
where $f=(f_n)\subseteq C(X)$. Define the level set $E(\alpha)$ according to the weighted average $\frac{S^{(w)}_N f(x)}{N}$ as
 \begin{equation}\label{level set E}
\begin{aligned}
 E(\alpha)= \left\{ x\in X : \lim_{N\rightarrow \infty} \frac{S^{(w)}_N f(x)}{N} =\alpha\right\}.
\end{aligned}
\end{equation}
The Hausdorff dimension of (\ref{level set E}) is obtained.
\begin{theorem}[A. Fan, \cite{fan2021multifractal}, Theorem 1.5]\label{theorem:4.1-1}
Let $S=\{ -1,1\}$. Suppose $(w_n)$ takes a finite number of values $v_1,v_2,...,v_m$ and $f_n(x)=x_n g(x_{n+1},x_{n+2},...)$ satisfying
\item[(C1)]$\mbox{For all } n\geq 1, ~g_n(x_{n+1},...)\in \{-1,1\} \mbox{ and only dependent on finite number of coordinates}. $
\item[(C2)]$\mbox{For all } 1\leq j\leq m, \mbox{ the following frequencies exist}$
 \begin{equation*}
\begin{aligned}
p_j:=\lim_{N\rightarrow\infty}\frac{\#\{ 1\leq n \leq N: w_n=v_j \}}{N}.
\end{aligned}
\end{equation*}
Then for $\alpha \in (-\sum p_j |v_j|,\sum p_j |v_j|)$,
 \begin{equation}\label{dimE}
\begin{aligned}
\dim E(\alpha)=\frac{1}{\log 2}\sum_{j=1}^{m}p_j \left( \log (e^{\lambda_\alpha v_j}+e^{-\lambda_\alpha v_j})-\lambda_\alpha v_j \frac{e^{\lambda_\alpha v_j}-e^{-\lambda_\alpha v_j}}{e^{\lambda_\alpha v_j}+e^{-\lambda_\alpha v_j}} \right),
\end{aligned}
\end{equation}
where $\lambda_\alpha$ is the unique solution of the equation
 \begin{equation*}
\begin{aligned}
\sum_{j=1}^{m} p_j v_j \frac{e^{\lambda_\alpha v_j}-e^{-\lambda_\alpha v_j}}{e^{\lambda_\alpha v_j}+e^{-\lambda_\alpha v_j}}=\alpha.
\end{aligned}
\end{equation*}
\end{theorem}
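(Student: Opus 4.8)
The plan is to treat this as a multifractal (thermodynamic) computation of the Hausdorff dimension of a level set on the full shift $\{-1,1\}^{\mathbb{N}}$, following the standard paradigm: produce a one-parameter family of Bernoulli-type measures, identify the member carried by $E(\alpha)$, and pin the dimension from above and below by its local dimension. First I would remove the functions $g_n$ by a recoding. Since $g_n(x_{n+1},x_{n+2},\dots)\in\{-1,1\}$ by (C1), for each fixed tail the map $x_n\mapsto x_n g_n(x_{n+1},\dots)$ is a bijection of $\{-1,1\}$; hence $\Phi:x\mapsto y$ with $y_n=x_n g_n(x_{n+1},\dots)$ is a bijection of the shift space whose coordinate $y_n$ depends on only finitely many coordinates of $x$. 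Under $\Phi$ the weighted average $\frac1N\sum_{n=1}^N w_n f_n(x)$ equals $\frac1N\sum_{n=1}^N w_n y_n$, so the level set (\ref{level set E}) is carried onto the level set $\widetilde E(\alpha)=\{y:\lim_N \frac1N\sum_{n=1}^N w_n y_n=\alpha\}$ of \emph{free} symbols. It therefore suffices to compute $\dim\widetilde E(\alpha)$ and to argue that $\Phi$ preserves Hausdorff dimension.

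For the dimension of $\widetilde E(\alpha)$ I would group the indices by their weight value, writing $\mathcal N_j=\{n:w_n=v_j\}$, which by (C2) has density $p_j$. Consider the inhomogeneous Bernoulli measure $\mu_\lambda$ under which the coordinates are independent and, for $n\in\mathcal N_j$, $\mu_\lambda(y_n=1)=q_j(\lambda)$ with $q_j(\lambda)=e^{-\lambda v_j}/(e^{\lambda v_j}+e^{-\lambda v_j})$; these weights come from maximizing the per-site entropy $\sum_j p_j H(q_j)$ under the constraint $\sum_j p_j v_j(2q_j-1)=\alpha$ by a Lagrange multiplier. The resulting tilt gives $2q_j-1=-\tanh(\lambda v_j)$ and $H(q_j)=\log(e^{\lambda v_j}+e^{-\lambda v_j})-\lambda v_j\tanh(\lambda v_j)$; choosing the sign $\lambda_\alpha=-\lambda$ so that the constraint reads $\sum_j p_j v_j\tanh(\lambda_\alpha v_j)=\alpha$ reproduces exactly the exponent in (\ref{dimE}). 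A strong law of large numbers for the independent (non-identically distributed) bounded variables $w_n y_n$, together with (C2), shows $\mu_{\lambda_\alpha}(\widetilde E(\alpha))=1$ and that the local dimension $\lim_{k}\frac{-\log\mu_{\lambda_\alpha}([y_1\cdots y_k])}{k\log 2}$ equals $\frac{1}{\log 2}\sum_j p_j H(q_j(\lambda_\alpha))$ for $\mu_{\lambda_\alpha}$-a.e.\ $y$; Billingsley's mass distribution principle then yields $\dim\widetilde E(\alpha)\ge \frac{1}{\log 2}\sum_j p_j H(q_j(\lambda_\alpha))$.

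For the matching upper bound I would run a covering / large-deviation estimate: count the cylinders $[y_1\cdots y_k]$ whose partial average $\frac1k\sum_{n\le k}w_n y_n$ lies in a small window around $\alpha$, and bound this count by the Chernoff exponent $\exp(k\sum_j p_j H(q_j(\lambda_\alpha))+o(k))$, obtained by optimizing the same tilted sum; covering $\widetilde E(\alpha)$ by such cylinders of generation $k\to\infty$ and letting $\varepsilon\to0$ caps the dimension at the same value. Existence and uniqueness of $\lambda_\alpha$ are routine: $\phi(\lambda)=\sum_j p_j v_j\tanh(\lambda v_j)$ satisfies $\phi'(\lambda)=\sum_j p_j v_j^2\,\mathrm{sech}^2(\lambda v_j)>0$ and $\phi(\lambda)\to\pm\sum_j p_j|v_j|$ as $\lambda\to\pm\infty$, so $\phi$ is a strictly increasing bijection of $\mathbb{R}$ onto $(-\sum_j p_j|v_j|,\sum_j p_j|v_j|)$, and $\lambda_\alpha$ is well defined precisely on the stated range of $\alpha$.

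The hard part will be the two points where stationarity fails. First, because $(w_n)$ is only asymptotically frequency-regular, the measures $\mu_\lambda$ are not shift-invariant and the classical Gibbs/thermodynamic machinery does not apply verbatim; the large-deviation upper bound and the a.e.\ local-dimension statement must instead be proved by hand, partitioning $\{1,\dots,N\}$ into weight-classes and passing to the limit carefully using (C2) (this is where uniformity, or the lack thereof, in the frequency convergence has to be managed). Second, I must verify that the recoding $\Phi$ does not distort dimension: this is immediate if each $g_n$ reads a window of uniformly bounded length (then $\Phi$ is bi-Lipschitz), but since (C1) gives only finite, possibly unbounded, windows, one either checks that $\Phi$ and $\Phi^{-1}$ are continuous, dimension-preserving on the relevant sets, or constructs the tilted measure directly in the original $x$-coordinates via one-sided conditional probabilities and confirms that its entropy is still $\sum_j p_j H(q_j(\lambda_\alpha))$.
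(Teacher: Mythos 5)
The paper contains no proof of this statement: Theorem \ref{theorem:4.1-1} is quoted verbatim from Fan \cite{fan2021multifractal} (Theorem 1.5) as background for Section 4, so your attempt can only be compared with the strategy of the cited source. On that score your overall plan is the standard and essentially correct one: tilted inhomogeneous Bernoulli-type measures with per-weight-class marginals obtained by a Lagrange computation (your sign bookkeeping with $q_j(\lambda)$, $\lambda_\alpha=-\lambda$ and $H(q_j)=\log(e^{\lambda v_j}+e^{-\lambda v_j})-\lambda v_j\tanh(\lambda v_j)$ checks out, as does the monotonicity argument giving existence and uniqueness of $\lambda_\alpha$ on the stated range of $\alpha$); Billingsley's lemma for the lower bound; a hand-made Chernoff cylinder count for the upper bound, since (C2) gives only asymptotic frequencies and no shift-invariance. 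In Fan's treatment the tilted measures are realized as Riesz products, which is the same object in different clothing.

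There is, however, one genuine error in a load-bearing step: the claim that $\Phi(x)=(x_n g_n(x_{n+1},\dots))_n$ is a bijection, and bi-Lipschitz when the windows are uniformly bounded. Both assertions are false. Take $g_n(x_{n+1},\dots)=x_{n+1}$, a window of length one: then $\Phi(x)=\Phi(-x)$ and $\Phi$ is the classical two-to-one map $y_n=x_nx_{n+1}$. The coordinatewise bijectivity you invoke does not assemble into a global inverse, because recovering $x_n$ from $y$ requires solving backwards from the infinite future. The damage is repairable: for windows of length at most $K$ one checks that $\Phi$ is Lipschitz, surjective (compactness/diagonalization), and at most $2^K$-to-one (two preimages sharing the block $(x_{N+1},\dots,x_{N+K})$ for a single $N$ agree at all $n\le N$, so a pigeonhole argument bounds the fiber), with inverse branches isometric on cylinders, whence dimension is preserved. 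But the cleaner fix --- and the only one of your two options that also covers the unbounded windows permitted by (C1) --- is the fallback you name at the end: avoid $\Phi$ altogether. Under $\mathbb{P}_{1/2}$ the variables $\xi_n=x_n g_n(x_{n+1},\dots)$ are i.i.d.\ symmetric $\pm1$ (condition on $\sigma(x_{n+1},x_{n+2},\dots)$, note that $\xi_{n+1},\xi_{n+2},\dots$ are measurable with respect to it, and induct backwards), so the tilted measure can be built directly on the $x$-space as the Riesz product
\begin{equation*}
d\mu_\lambda=\prod_{n\ge 1}\bigl(1+\tanh(\lambda w_n)\,\xi_n(x)\bigr)\,d\mathbb{P}_{1/2},
\end{equation*}
and the remaining work is to show that the density's dependence on coordinates beyond position $k$ perturbs $\log\mu_\lambda([x_1\cdots x_k])$ by only $o(k)$ --- this is precisely where (C1) enters, and it is the step your bi-Lipschitz shortcut was silently absorbing. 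With that replacement, your non-stationarity concerns are handled exactly as you propose, by partitioning $\{1,\dots,N\}$ into weight-classes and using (C2) directly.
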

Let $\mu$ be the M\"{o}bius function, the author also considers the level set
 \begin{equation}\label{level F}
\begin{aligned}
F(\alpha)= \left\{ x\in \{-1,1 \}^{\mathbb{N}} : \lim_{N\rightarrow \infty} \frac{1}{N}\sum_{n=1}^{N}  \mu(n)x_n x_{n+1} =\alpha \right\}.
\end{aligned}
\end{equation}
Then the dimension spectrum for the level set $F(\alpha)$ is also obtained.

\begin{theorem}[A. Fan, \cite{fan2021multifractal}, Theorem 1.6]\label{theorem:4.1-2} For $\alpha \in (-\frac{\pi^2}{6},  \frac{\pi^2}{6})$,
	 \begin{equation*}
	\begin{aligned}
	\dim F(\alpha)=1-\frac{6}{\pi^2}+\frac{6}{\pi^2\log2}H(\frac{1}{2}+\frac{\pi^2}{12}\alpha),
	\end{aligned}
	\end{equation*}	
	where $H(x)=-x\log x-(1-x)\log(1-x)$.
\end{theorem}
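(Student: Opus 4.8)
The plan is to deduce Theorem~\ref{theorem:4.1-2} from Theorem~\ref{theorem:4.1-1} by specializing the weighted sum. I would take the ambient system to be the full shift on $\{-1,1\}^{\mathbb{N}}$, the weights $w_n=\mu(n)$, and $f_n(x)=x_nx_{n+1}$, so that $g_n(x_{n+1},\dots)=x_{n+1}\in\{-1,1\}$ depends on a single coordinate and $(\rm C1)$ holds automatically. Since $\mu$ takes only the values $0,1,-1$, the weight sequence assumes finitely many values, and I would set $v_1=0,\ v_2=1,\ v_3=-1$; it then remains to verify the frequency condition $(\rm C2)$, to evaluate the formula (\ref{dimE}), and to recast the result in terms of the binary entropy $H$.

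First I would compute the three frequencies. The density of squarefree integers is the classical $1/\zeta(2)=\frac{6}{\pi^2}$, so the frequency of $v_1=0$ (the non-squarefree integers) is $p_1=1-\frac{6}{\pi^2}$. For the two nonzero values, $p_2+p_3$ equals the squarefree density $\frac{6}{\pi^2}$, while $p_2-p_3=\lim_{N\to\infty}\frac{1}{N}\sum_{n\le N}\mu(n)=0$ by the Prime Number Theorem; hence $p_2=p_3=\frac{3}{\pi^2}$ and $(\rm C2)$ is satisfied.

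Next I would substitute these data into (\ref{dimE}). The term for $v_1=0$ reduces to $\frac{1}{\log 2}\,p_1\log 2=1-\frac{6}{\pi^2}$, which already produces the first two summands of the asserted formula. The summand of (\ref{dimE}) is even in $v_j$, so the $v_2=1$ and $v_3=-1$ contributions are equal and combine into $\frac{6}{\pi^2\log 2}\bigl(\log(e^{\lambda_\alpha}+e^{-\lambda_\alpha})-\lambda_\alpha\tanh\lambda_\alpha\bigr)$. By the same parity symmetry the defining equation for $\lambda_\alpha$ collapses to $\frac{6}{\pi^2}\tanh\lambda_\alpha=\alpha$, i.e. $\tanh\lambda_\alpha=\frac{\pi^2}{6}\alpha$, which admits a unique real solution exactly when $\alpha\in\bigl(-\frac{6}{\pi^2},\frac{6}{\pi^2}\bigr)$, precisely the range on which the argument $\frac{1}{2}+\frac{\pi^2}{12}\alpha$ of $H$ lies in $(0,1)$.

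The last step is the algebraic identity $\log(e^{\lambda}+e^{-\lambda})-\lambda\tanh\lambda=H\bigl(\frac{1+\tanh\lambda}{2}\bigr)$. Writing $p=\frac{e^{\lambda}}{e^{\lambda}+e^{-\lambda}}$ and $q=1-p$, one has $\tanh\lambda=p-q$ and $\log(e^{\lambda}+e^{-\lambda})=\lambda-\log p=-\lambda-\log q$, whence $-p\log p-q\log q=\log(e^{\lambda}+e^{-\lambda})-\lambda(p-q)$, which is the identity with $p=\frac{1}{2}+\frac{\pi^2}{12}\alpha$. Assembling the three pieces gives the stated dimension formula. Essentially all of this is forced algebra governed by the symmetry $v\mapsto-v$; the one genuinely non-elementary ingredient, and hence the main obstacle, is the verification of $(\rm C2)$, specifically the equidistribution $p_2=p_3$, which is equivalent to $\sum_{n\le N}\mu(n)=o(N)$ and therefore to the Prime Number Theorem.
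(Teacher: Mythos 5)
The paper offers no proof of this statement at all: it is quoted as background from Fan \cite{fan2021multifractal} (his Theorem 1.6), so there is no internal argument to compare against, and your proposal should be judged on its own merits. On those merits it is correct, and it is the natural derivation from Theorem \ref{theorem:4.1-1}: the choices $w_n=\mu(n)$, $f_n(x)=x_nx_{n+1}$ (so $g_n=x_{n+1}$) satisfy $(\rm C1)$; the frequencies $p_1=1-\tfrac{6}{\pi^2}$ for the value $0$ and $p_2=p_3=\tfrac{3}{\pi^2}$ for $\pm 1$ exist, and you correctly isolate the one non-elementary ingredient, namely that $p_2=p_3$ follows from $\sum_{n\le N}\mu(n)=o(N)$, i.e.\ the Prime Number Theorem (the squarefree density $6/\pi^2$ itself is elementary); the parity reduction of (\ref{dimE}) and of the equation for $\lambda_\alpha$ is right, as is the identity $\log(e^{\lambda}+e^{-\lambda})-\lambda\tanh\lambda=H\bigl(\tfrac{1+\tanh\lambda}{2}\bigr)$ with $\tfrac{1+\tanh\lambda_\alpha}{2}=\tfrac12+\tfrac{\pi^2}{12}\alpha$. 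One discrepancy is worth flagging explicitly: your argument produces the formula exactly for $\alpha\in\bigl(-\tfrac{6}{\pi^2},\tfrac{6}{\pi^2}\bigr)$, whereas the statement as printed in this paper asserts the larger interval $\bigl(-\tfrac{\pi^2}{6},\tfrac{\pi^2}{6}\bigr)$. Since $\tfrac{6}{\pi^2}<\tfrac{\pi^2}{6}$, since $H$ is undefined when $\tfrac12+\tfrac{\pi^2}{12}\alpha\notin[0,1]$, and since $\bigl|\tfrac1N\sum_{n\le N}\mu(n)x_nx_{n+1}\bigr|\le\tfrac1N\sum_{n\le N}|\mu(n)|\to\tfrac{6}{\pi^2}$ forces $F(\alpha)=\emptyset$ for $|\alpha|>\tfrac{6}{\pi^2}$, the printed range is evidently a typo (an inverted fraction) for $\bigl(-\tfrac{6}{\pi^2},\tfrac{6}{\pi^2}\bigr)$; the range your derivation yields is the correct one and agrees with Fan's original statement.
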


In this section, we establish the LDP based on the weighted multiple average (\ref{12}). Main results of this section are presented below.

Let $N_1,N_2,...,N_d,p_1,p_2,...,p_d\geq 1$ with $\gcd(p_i,p_j)=1$ for all $1\leq i<j \leq d$. Assume the weights ${\bf w}=(w_{\bf i})_{{\bf i}\in \mathbb{N}^d}$ takes a finte number of values $v_1,v_2,...,v_m$ and the following frequencies exsist
\begin{equation}\label{111}
	\begin{aligned}
		P_k:=\lim_{{\bf N}\rightarrow\infty}\frac{\#\{{\bf x}\in \mathcal{L}_{N_1\times N_2\times \cdots \times N_d}({\bf i}):w_{\bf x}=v_k\}}{|\mathcal{L}_{N_1\times N_2\times \cdots \times N_d}({\bf i})|}
	\end{aligned}
\end{equation}
for all ${\bf i}\in\mathcal{I}_{\bf p}$ and $1\leq k\leq m$.

\begin{theorem}\label{theorem:4.1}
	For any $d\geq 1$ and $p_1,p_2,...,p_d\geq 1$ with $\gcd(p_i,p_j)=1$ for $1\leq i< j\leq d$. Assume the weighted ${\bf w}=(w_{{\bf i}})_{{\bf i}\in \mathbb{N}^d}$ satisfies frequency condition $(\ref{111})$. Then
		\item[1.]The free energy function associated to sum {\rm (\ref{weighted multiple sum})} is
	\begin{equation*}
		\begin{aligned}
		F^{\bf w}_{\frac{1}{2}}(\beta)=&\sum_{k=1}^m P_k	\log(e^{\beta v_k}+e^{-\beta v_k})-\log 2.	
		\end{aligned}
	\end{equation*}
	\item[2.]The function $F^{\bf w}_{\frac{1}{2}}(\beta)$ is differentiable with respect to $\beta \in \mathbb{R}$.
	\item[3.]The multiple average {\rm (\ref{12})} satisfies a LDP with the rate function
	\begin{equation*}
		\begin{aligned}
			I^{\bf w}_{\frac{1}{2}}(x)=&\sup_{\beta \in \mathbb{R}}\left( \beta x-\sum_{k=1}^m P_k	\log(e^{\beta v_k}+e^{-\beta v_k})+\log 2 \right).
		\end{aligned}
	\end{equation*}
Furthermore,
	\begin{equation*}
		\begin{aligned}
			I^{\bf w}_{\frac{1}{2}}(y)=&\sum_{k=1}^m P_k \left(\eta v_k\frac{e^{\eta v_k}-e^{-\eta v_k}}{e^{\eta v_k}+e^{-\eta v_k}}-\log(e^{\eta v_k}+e^{-\eta v_k})\right)+\log 2,
		\end{aligned}
	\end{equation*}
	where 
		\begin{equation*}
		\begin{aligned}
	y=\sum_{k=1}^m P_k\frac{v_k(e^{\eta v_k}-e^{-\eta v_k})}{e^{\eta v_k}+e^{-\eta v_k}}. 
			\end{aligned}
\end{equation*}
\end{theorem}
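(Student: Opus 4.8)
The plan is to mirror the proof of Theorem \ref{theorem:3.1}, exploiting the fact that at $r=\tfrac{1}{2}$ the Ising chain collapses into a product of independent bond variables. First I would invoke Lemma \ref{lemma:2.2} to split the weighted sum along the independent sublattices,
\begin{equation*}
S_{N_1\times \cdots \times N_d}^{\mathbf{p},\mathbf{w}}
= \sum_{\mathbf{i}\in\mathcal{I}_{\mathbf{p}}}\left(\sum_{\mathbf{x}\in\mathcal{L}_{N_1\times\cdots\times N_d}(\mathbf{i})} w_{\mathbf{x}}\,\sigma_{\mathbf{x}}\sigma_{\mathbf{p}\cdot\mathbf{x}}\right).
\end{equation*}
For a fixed $\mathbf{i}\in\mathcal{I}_{\mathbf{p}}$ with $|\mathcal{L}_{N_1\times\cdots\times N_d}(\mathbf{i})|=\ell$, the inner sum is distributed as $\sum_{s=1}^{\ell} w^{(\mathbf{i})}_s \tau_s\tau_{s+1}$, a weighted nearest-neighbour chain of $\ell+1$ spins carrying $\ell$ bonds, with the chains mutually independent across distinct $\mathbf{i}$, exactly as in the passage leading to $(\ref{106})$.

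The decisive simplification at $r=\tfrac{1}{2}$ is a change of variables: since the $\tau_s$ are i.i.d.\ uniform on $\{-1,+1\}$, the bond variables $\eta_s:=\tau_s\tau_{s+1}$, $1\le s\le\ell$, are again i.i.d.\ uniform on $\{-1,+1\}$. Hence the single-chain generating function factorizes,
\begin{equation*}
\mathbb{E}_{1/2}\!\left(\exp\Big(\beta\sum_{s=1}^{\ell} w^{(\mathbf{i})}_s \tau_s\tau_{s+1}\Big)\right)
=\prod_{s=1}^{\ell}\frac{e^{\beta w^{(\mathbf{i})}_s}+e^{-\beta w^{(\mathbf{i})}_s}}{2}
=\prod_{k=1}^{m}\left(\frac{e^{\beta v_k}+e^{-\beta v_k}}{2}\right)^{n_k(\mathbf{i})},
\end{equation*}
where $n_k(\mathbf{i})$ counts the positions of $\mathcal{L}_{N_1\times\cdots\times N_d}(\mathbf{i})$ bearing weight $v_k$. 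Taking logarithms and invoking the frequency hypothesis $(\ref{111})$, which gives $n_k(\mathbf{i})/\ell\to P_k$ as $\mathbf{N}\to\infty$, the log-contribution of a length-$\ell$ chain is asymptotically $\ell\sum_{k}P_k\big(\log(e^{\beta v_k}+e^{-\beta v_k})-\log 2\big)$.

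Next I would assemble these contributions as in the step from $(\ref{107})$ to $(\ref{108})$: group the chains by length through $\mathcal{K}_{N_1\times\cdots\times N_d;\ell}$ and pass to the limit using Lemma \ref{lemma:1.5}. Parts 1 and 2 of that lemma furnish the limiting density $\frac{(p_1\cdots p_d-1)^2}{(p_1\cdots p_d)^{\ell+1}}$ of length-$\ell$ chains, and part 3 (Weierstrass $M$-test) legitimizes interchanging $\lim_{\mathbf{N}\to\infty}$ with $\sum_\ell$, since here $\log F_\ell$ grows only linearly in $\ell$ while the density decays geometrically. This yields
\begin{equation*}
F^{\mathbf{w}}_{1/2}(\beta)
=\left(\sum_{\ell=1}^{\infty}\frac{(p_1\cdots p_d-1)^2}{(p_1\cdots p_d)^{\ell+1}}\,\ell\right)\sum_{k=1}^{m}P_k\big(\log(e^{\beta v_k}+e^{-\beta v_k})-\log 2\big).
\end{equation*}
Evaluating $\sum_{\ell\ge1}\ell\,x^{\ell}=x/(1-x)^2$ with $x=1/(p_1\cdots p_d)$ shows the bracketed prefactor equals $1$, and $\sum_k P_k=1$ then gives the formula in part 1. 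Part 2 is immediate: $F^{\mathbf{w}}_{1/2}$ is a finite sum of the entire functions $\beta\mapsto\log(e^{\beta v_k}+e^{-\beta v_k})$, hence real-analytic on $\mathbb{R}$. For part 3 I would apply the G\"artner--Ellis theorem (Theorem \ref{theorem:GE}): finiteness near the origin together with differentiability deliver the LDP with $I^{\mathbf{w}}_{1/2}(x)=\sup_\beta(\beta x-F^{\mathbf{w}}_{1/2}(\beta))$, and the stated closed form follows by setting $(F^{\mathbf{w}}_{1/2})'(\eta)=y$ and substituting into $\eta y-F^{\mathbf{w}}_{1/2}(\eta)$.

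The main obstacle I anticipate is the rigorous coupling of the frequency condition with the interchange of limit and infinite sum. The delicate point is that $(\ref{111})$ controls each chain individually, whereas the free energy aggregates infinitely many chains of unbounded length; one must verify that $n_k(\mathbf{i})/\ell\to P_k$ remains compatible with the grouping by $\mathcal{K}_{N_1\times\cdots\times N_d;\ell}$ and with the uniform domination demanded by Lemma \ref{lemma:1.5}(3). Once the linear growth of $\log F_\ell$ is balanced against the geometric decay of the chain density, the $M$-test closes this gap precisely as in Theorem \ref{theorem:3.1}.
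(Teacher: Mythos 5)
Your proposal is correct and arrives at the same formula, but the core single-chain computation takes a genuinely different route from the paper. The paper stays inside the transfer-matrix formalism of Theorem \ref{theorem:3.1}: it observes that the matrices $\bigl(\begin{smallmatrix} e^{\beta v_k} & e^{-\beta v_k} \\ e^{-\beta v_k} & e^{\beta v_k} \end{smallmatrix}\bigr)$ commute (its equation (\ref{109})), sorts them along each chain, diagonalizes them simultaneously with the common eigenvectors $\frac{1}{\sqrt{2}}(1,\pm 1)$, and extracts $\frac{1}{2^{\ell}}\prod_{k}(e^{\beta v_k}+e^{-\beta v_k})^{P_k\ell}$ because at $r=\frac{1}{2}$ the boundary vector $(1,1)$ annihilates the contribution of the second eigenvalue. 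Your substitution by bond variables $\eta_s=\tau_s\tau_{s+1}$, i.i.d.\ uniform at $r=\frac{1}{2}$, produces the same product $\prod_{k}\bigl(\frac{e^{\beta v_k}+e^{-\beta v_k}}{2}\bigr)^{n_k(\mathbf{i})}$ in one line, with no matrices and no sorting; it is more elementary, and it is exact for every finite chain regardless of the order of the weights, which is precisely what the commutation argument buys the paper. From there the two proofs coincide: grouping by $\mathcal{K}_{N_1\times\cdots\times N_d;\ell}$, Lemma \ref{lemma:1.5} for the limiting densities and the interchange of limit and sum, the identity $\sum_{\ell\geq 1}\ell x^{\ell}=x/(1-x)^2$ collapsing the prefactor to $1$, and G\"artner--Ellis (Theorem \ref{theorem:GE}) for part 3. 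Your part 2 (a finite sum of real-analytic functions) is simpler than, and equivalent to, the paper's explicit computation of $(F^{\mathbf{w}}_{1/2})'$ and $(F^{\mathbf{w}}_{1/2})''$. The obstacle you flag---that condition (\ref{111}) controls each fixed chain while the aggregation runs over chains of length exactly $\ell$ whose starting points move with $\mathbf{N}$, so replacing $n_k(\mathbf{i})$ by $P_k\ell$ tacitly requires a uniformity the hypothesis does not literally provide---is genuine, but the paper's proof makes exactly the same replacement (the exponent $P_k\ell$ in its matrix product) without further justification; on this point your argument is no less rigorous than the source, and more candid about where the gap lies.
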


\begin{proof}
	\item[\bf 1.]Observe that the transition matrices are commute, i.e., 
	\begin{equation}\label{109}
		\begin{aligned}
			\left[
			\begin{matrix}
				e^{\beta v_i} & e^{-\beta v_i}  \\
				e^{-\beta v_i}& e^{\beta v_i} 			
			\end{matrix}
			\right]	\left[
			\begin{matrix}
				e^{\beta v_j} & e^{-\beta v_j}  \\
				e^{-\beta v_j}& e^{\beta v_j} 			
			\end{matrix}
			\right]=	\left[
			\begin{matrix}
				e^{\beta v_j} & e^{-\beta v_j}  \\
				e^{-\beta v_j}& e^{\beta v_j} 		
			\end{matrix}
			\right]	\left[
			\begin{matrix}
				e^{\beta v_i} & e^{-\beta v_i}  \\
				e^{-\beta v_i}& e^{\beta v_i} 	
			\end{matrix}
			\right],
		\end{aligned}
	\end{equation}
	for all $1\leq i<j\leq m$. According to (\ref{109}), we can sort the matrices $	\left[
	\begin{matrix}
	e^{\beta v_i} & e^{-\beta v_i}  \\
	e^{-\beta v_i}& e^{\beta v_i} 	
	\end{matrix}
	\right]$ and choose the same eigenvectors for all $1\leq i \leq m$. \\	
	Then by Lemma \ref{lemma:2.2}, Lemma \ref{lemma:1.5} and (\ref{111}), we have
	\begin{equation*}
		\begin{aligned}
			F_{\frac{1}{2}}^{\bf w}(\beta)&=\sum_{\ell=1}^{\infty}\frac{(p_1p_2\cdots p_d-1)^2}{(p_1p_2\cdots p_d)^{\ell+1}}\log\frac{1}{2^{\ell+1}}\\
			&\times  \left(
			\begin{matrix}
				1&1\\
				
			\end{matrix}
			\right)\prod_{k=1}^m\left[
			\begin{matrix}
				\frac{1}{\sqrt{2}} & \frac{1}{\sqrt{2}}  \\
				\frac{1}{\sqrt{2}} & -\frac{1}{\sqrt{2}} 
			\end{matrix}
			\right]\left[
			\begin{matrix}
				e^{\beta v_k}+e^{-\beta v_k} & 0  \\
				0 &  e^{\beta v_k}-e^{-\beta v_k}
				
			\end{matrix}
			\right]^{P_k\ell}\left[
			\begin{matrix}
				\frac{1}{\sqrt{2}} & \frac{1}{\sqrt{2}} \\
				\frac{1}{\sqrt{2}}& -\frac{1}{\sqrt{2}} 
				
			\end{matrix}
			\right] \left(
			\begin{matrix}
				1\\
				1
			\end{matrix}
			\right) \\			
			&=\sum_{\ell=1}^{\infty}\frac{(p_1p_2\cdots p_d-1)^2}{(p_1p_2\cdots p_d)^{\ell+1}}\log\frac{1}{2^{\ell}}  \prod_{k=1}^m(e^{\beta v_k}+e^{-\beta v_k})^{P_k\ell} \\
			&=\sum_{k=1}^m P_k\sum_{\ell=1}^{\infty}\frac{(p_1p_2\cdots p_d-1)^2}{(p_1p_2\cdots p_d)^{\ell+1}} \left[-\ell\log 2+\ell\log(e^{\beta v_k}+e^{-\beta v_k})\right]\\
			&=\sum_{k=1}^m P_k	\log(e^{\beta v_k}+e^{-\beta v_k})-\log 2.			
		\end{aligned}
	\end{equation*}
	\item[\bf 2.]The formula of Theorem \ref{theorem:4.1} \text{(1)} implies
	\begin{equation*}
		\begin{aligned}
			(F_{\frac{1}{2}}^{\bf w})'(\beta)=&\sum_{k=1}^m P_k \frac{v_k(e^{\beta v_k}-e^{-\beta v_k})}{e^{\beta v_k}+e^{-\beta v_k}}
		\end{aligned}
	\end{equation*}
	and
	\begin{equation*}
		\begin{aligned}
			(F_{\frac{1}{2}}^{\bf w})''(\beta)=&\sum_{k=1}^m P_k\frac{4v_k^2}{(e^{\beta v_k}+e^{-\beta v_k})^2}>0.
		\end{aligned}
	\end{equation*}
	\item[\bf 3.]The result (3) of Theorem \ref{theorem:4.1} follows from Theorem \ref{theorem:GE} and Theorem \ref{theorem:4.1} \text{(1)}, \text{(2)}.
\end{proof}

\begin{remark}
	Note that the rate function $I^{\bf w}_{\frac{1}{2}}$ is dependent only on frequency condition $(\ref{111})$, not on the dimension $d\in \mathbb{N}$ and the multiple constraint vector ${\bf p}=(p_1,...,p_d)\in \mathbb{N}^d$.
\end{remark}

Note that when $w_{\bf i}=1$ for all ${\bf i}\in \mathbb{N}^d$, $	F_{\frac{1}{2}}^{\bf w}(\beta)$ is equal to the free energy function of the sum $S_N=\sum_{i=1}^{N} \sigma_i \sigma_{2i}$ in \cite{carinci2012nonconventional}.

We say ${\bf w}=(w_{\bf i})_{{\bf i}\in \mathbb{N}^d}$ is a \emph{M\"{o}bius weight} if it is M\"{o}bius on each spin, for instance, in the spin
\begin{equation*}
	\begin{aligned}
		(\alpha_1,\alpha_2,...,\alpha_d),(\alpha p_1,\alpha_2 p_2,...,\alpha_d p_d),...,(\alpha p_1^r,\alpha_2 p_2^r,...,\alpha_d p_d^r),
	\end{aligned}
\end{equation*}
we put $w_{(\alpha_1 p_1^{i-1},\alpha_2 p_2^{i-1},...,\alpha_d p_d^{i-1})}=\mu(i)$ for all $1\leq i \leq r+1$. Then we have the following corollary
\begin{corollary}\label{coro mobi}
		For any $d\geq 1$, $p_1,p_2,...,p_d\geq 1$ with $\gcd(p_i,p_j)=1$ for all $1\leq i< j\leq d$ and ${\bf w}=(w_{{\bf i}})_{{\bf i}\in \mathbb{N}^d}$ is a M\"{o}bius weight. The free energy function associated to sum $S_{N_1\times N_2\times \cdots \times N_d}^{{\bf p,w}}$ is
	\begin{equation*}
		\begin{aligned}
			F^{\bf w}_{\frac{1}{2}}(\beta)=&\frac{6}{\pi^2}\log\left(\frac{1}{2}(e^\beta+e^{-\beta})\right).
		\end{aligned}
	\end{equation*}
\end{corollary}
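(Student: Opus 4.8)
The plan is to apply Theorem \ref{theorem:4.1} directly; the only real work is to identify the values $v_k$ taken by the M\"obius weight together with their frequencies $P_k$ along each multiplicative chain. Since $w_{(\alpha_1 p_1^{i-1},\ldots,\alpha_d p_d^{i-1})}=\mu(i)$, along any chain $\mathcal{L}_{N_1\times \cdots \times N_d}({\bf i})$ starting at ${\bf i}\in\mathcal{I}_{\bf p}$ the weight at the $\ell$-th site is exactly $\mu(\ell)$, independent of the starting point ${\bf i}$. Hence the possible values are $v=-1,0,1$, and the frequency $P_k$ in $(\ref{111})$ equals the natural density of $\{\ell\geq 1:\mu(\ell)=v_k\}$.

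First I would record the relevant densities. The density of non-squarefree integers (those with $\mu=0$) is $1-\frac{6}{\pi^2}$, since the density of squarefree integers equals $1/\zeta(2)=\frac{6}{\pi^2}$. Among the squarefree integers, the value $\mu=+1$ and the value $\mu=-1$ occur with equal density; this follows from the vanishing of the mean of the M\"obius function, $\frac{1}{N}\sum_{n\leq N}\mu(n)\to 0$. Consequently $P_0=1-\frac{6}{\pi^2}$ and $P_{+1}=P_{-1}=\frac{3}{\pi^2}$, which in particular verifies that the frequency condition $(\ref{111})$ holds for the M\"obius weight, so that Theorem \ref{theorem:4.1} is applicable.

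It then remains to substitute these into the formula of Theorem \ref{theorem:4.1}(1). The $v=0$ term contributes $P_0\log(e^{0}+e^{0})=(1-\frac{6}{\pi^2})\log 2$, while the $v=+1$ and $v=-1$ terms each contribute $\frac{3}{\pi^2}\log(e^{\beta}+e^{-\beta})$. Adding these and subtracting the trailing $\log 2$ of the formula, the constant pieces collapse to $-\frac{6}{\pi^2}\log 2$, and one obtains
\begin{equation*}
F^{\bf w}_{\frac{1}{2}}(\beta)=\frac{6}{\pi^2}\left(\log(e^{\beta}+e^{-\beta})-\log 2\right)=\frac{6}{\pi^2}\log\left(\frac{1}{2}(e^{\beta}+e^{-\beta})\right),
\end{equation*}
which is the claimed formula.

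The only genuine obstacle is the number-theoretic input that $\mu=+1$ and $\mu=-1$ have equal density among squarefree integers; everything else is bookkeeping built on Theorem \ref{theorem:4.1}. I expect this to be handled by invoking $\sum_{n\leq N}\mu(n)=o(N)$ (a consequence of the Prime Number Theorem), which forces the signed density to vanish and hence pins each of the two densities to $\frac{3}{\pi^2}$.
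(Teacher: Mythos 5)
Your proposal is correct and follows the same route as the paper: both apply Theorem \ref{theorem:4.1} after observing that along every chain $\mathcal{M}_{\bf p}({\bf i})$ the weights read $\mu(1),\mu(2),\ldots$ independently of ${\bf i}$, and then substitute the M\"obius frequencies into the formula of part (1). The one difference is in the number-theoretic input. The paper only invokes $\lim_{r\to\infty}\frac{1}{r}\sum_{n=1}^{r}|\mu(n)|=\frac{6}{\pi^2}$ (the density of squarefree integers, an elementary fact), because $e^{\beta v}+e^{-\beta v}$ is even in $v$, so the $v=+1$ and $v=-1$ terms are identical and the formula depends only on the sum $P_{1}+P_{-1}=\frac{6}{\pi^2}$; the individual values of $P_{\pm 1}$ never matter. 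You instead pin down $P_{+1}=P_{-1}=\frac{3}{\pi^2}$ via $\sum_{n\leq N}\mu(n)=o(N)$, which is Prime Number Theorem strength and strictly more than the computation requires. On the other hand, your heavier input is not wasted: the hypothesis (\ref{111}) of Theorem \ref{theorem:4.1}, as stated, asks that each frequency $P_k$ exist separately, and the paper's proof quietly assumes this while only ever controlling the sum $P_{1}+P_{-1}$; your appeal to $\sum_{n\leq N}\mu(n)=o(N)$ verifies that hypothesis literally. So your argument is marginally less economical in its arithmetic input but marginally more careful in checking the theorem's stated hypotheses; both yield the same bookkeeping and the same final formula.
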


\begin{proof}
	Since
	\begin{equation*}
		\begin{aligned}
			\lim_{r\rightarrow\infty}\frac{1}{r}\sum_{n=1}^{r}| \mu(n) |=\frac{6}{\pi^2},
		\end{aligned}
	\end{equation*}
	the frequency of $1,-1$ and $0$ denoted by $P_1, P_{-1}$ and $P_0$ respectively (defined in (\ref{111})), satisfying $P_{1}+P_{-1}=\frac{6}{\pi^2}$ and $P_{0}=1-\frac{6}{\pi^2}$. 
	
	Then by Theorem \ref{theorem:4.1}, we have
	\begin{equation*}
		\begin{aligned}
			F^{\bf w}_{\frac{1}{2}}(\beta)=&\sum_{k=1}^m P_k	\log(e^{\beta v_k}+e^{-\beta v_k})-\log2\\
			=&(P_0-1)\log 2+(P_1+P_{-1})\log(e^\beta+e^{-\beta})\\
			=&\frac{6}{\pi^2}\log\left(\frac{1}{2}(e^\beta+e^{-\beta})\right).
		\end{aligned}
	\end{equation*}
The proof is complete.
\end{proof}

\section{Boundary Conditions on Multiple sums}

In \cite{chazottes2014thermodynamic}, the authors consider the`multiplicative Ising model' with the parameters of the \emph{inverse temperature }$\beta$, \emph{coupling strength} $J$, and \emph{magnetic field} $h$ as the lattice spin on $\mathcal{A}^{\mathbb{N}_0}$($\mathbb{N}_0=\mathbb{N} \cup \{ 0\}$) with the Hamiltonian
	\begin{equation}\label{Hamil}
	\begin{aligned}
H(\sigma)=-\beta \left( \sum_{i\in \mathbb{N}} J\sigma_1 \sigma_2+ h\sum_{i\in \mathbb{N}}\sigma_i  \right).
	\end{aligned}
\end{equation}
In the lattice interval $[1,2N]$, they also define the Hamiltonian (\ref{Hamil}) with the boundary condition $\eta\in \{-1,1\}$ as
	\begin{equation}\label{BHamil}
\begin{aligned}
H_N^{\eta}(\sigma_{[1,2N]})=-\beta \left( \sum_{i=1}^{N} J\sigma_1 \sigma_2+ \sum_{i=1}^{2N}h\sigma_i \pm \sum_{i=N+1}^{2N}\sigma_i \eta_{2i} \right).
\end{aligned}
\end{equation}
In this circumstance, $H_N^{\emptyset}(\sigma_{[1,2N]})= \sum_{i=1}^{N} -\beta J\sigma_1 \sigma_2+ \sum_{i=1}^{2N}-\beta h\sigma_i $ denotes the Hamiltonian with free boundary conditions. 

Following \cite{chazottes2014thermodynamic}, we impose the boundary conditions on the multiple sum (\ref{14}) when $r=\frac{1}{2}$. To avoid the cumbersome computation, we restrict ourself on the case when $d=2$ since the case of general dimensions can be treated in the same fashion.

For $N_1,N_2\geq 1$, define the \emph{Dirichlet boundary condition Type 1} (BC1) by, putting $+1$ on the boundary. That is,
\begin{equation*}
	\begin{aligned}
 &\sigma_{(1,1)},...,\sigma_{(1,N_2)}=+1, \sigma_{(1,N_2)},...,\sigma_{(N_1,N_2)}=+1,\\ &\sigma_{(1,1)},...,\sigma_{(N_1,1)}=+1, \sigma_{(N_1,1)},...,\sigma_{(N_1,N_2)}=+1.
 	\end{aligned}
\end{equation*}  
The \emph{Dirichlet boundary condition Type 2} (BC2) is defined by (All $+1$ except $(i_1,i_2)$ with $1<i_1<N_1$ and $1<i_2<N_2$). The \emph{Periodic boundary condition} (BCp) is defined by (All spins have same sign on its starting point and end.)

Define the \emph{energy function} corresponding to boundary conditions by
\begin{equation}\label{FBC}
	\begin{aligned}
		F^{({\rm BCi})}(\beta):=\lim_{N_1,N_2\rightarrow\infty}\frac{1}{N_1N_2}\log\mathbb{E}_{\frac{1}{2};{\rm (BCi)}}
		\left(e^{\beta S_{N_1\times N_2}^{(p_1,p_2)}} \right),~\rm i\in\{\rm 1,2,p\}.
	\end{aligned}
\end{equation} 

\begin{theorem}\label{theorem bc1}
\item[1.]The explicit formula of energy functions $F^{({\rm BCi})}(\beta)$ for $\rm i\in\{ 1,2,p\}$ are presented as follows. 
\begin{equation*}
	\begin{aligned}
		F^{({\rm BC1})}(\beta)&=\log (e^\beta+e^{-\beta})-\log2,\\
		F^{({\rm BC2})}(\beta)&=\log(e^\beta+e^{-\beta})-\left( \frac{2p_1p_2-1}{p_1p_2}\right)\log2,\\
		F^{({\rm BCp})}(\beta)&=\log(e^\beta+e^{-\beta})-\left( \frac{2p_1p_2-1}{p_1p_2}\right)\log2+\sum_{\ell=1}^{\infty}\frac{(p_1p_2-1)^2}{(p_1p_2)^{\ell+1}}\log\left[1+\left(\frac{e^{\beta}-e^{-\beta}}{e^{\beta}+e^{-\beta}}\right)^\ell\right].
	\end{aligned}
\end{equation*}
\item[2.]The function $F^{({\rm BCi})}(\beta)$ is a differentiable with respect to $\beta \in \mathbb{R}$ for $\rm i\in\{ 1,2,p\}$.	
\item[3.]The multiple average {\rm (\ref{multiple sum average})} with boundary conditions satisfy LDP with the rate functions
\begin{equation*}
	\begin{aligned}
		I(x)=&\sup_{\beta \in \mathbb{R}}\left( \beta x-F^{({\rm BCi})}(\beta) \right),~\rm i\in\{ 1,2,p\}.
	\end{aligned}
\end{equation*}
Furthermore, if $(F^{({\rm BCi})})'(\eta)=y$, then $I(y)=\eta y- F^{({\rm BCi})}(\eta)$, $\rm i\in\{ 1,2,p\}$.	
\end{theorem}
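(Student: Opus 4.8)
The plan is to reuse the sublattice decomposition from Lemma \ref{lemma:2.2} together with the transfer-matrix computation developed in Theorem \ref{theorem:3.1}, specializing to $r=\tfrac12$ (so $h=0$, $\Lambda_\pm = e^\beta \pm e^{-\beta}$, and $|v^T\cdot e_+|^2=2$), and then account for how each boundary condition modifies the partition function of the one-dimensional Ising chains living on the strings $\mathcal{L}_{N_1\times N_2}({\bf i})$. First I would split the index set $\mathcal{N}_{N_1\times N_2}$ into two parts: the ``interior'' starting points ${\bf i}\in\mathcal{I}_{\bf p}$ whose chains $\mathcal{M}_{\bf p}({\bf i})$ stay away from the boundary, and the starting points whose chains touch the prescribed boundary. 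For BC1, where $+1$ is placed on the entire frame, the key observation should be that every chain either sits in the interior or is pinned at its endpoints by $+1$; the pinned chains form a vanishing fraction of the lattice, so in the thermodynamic limit only the free-energy density of the bulk survives. Since at $r=\tfrac12$ the bulk contribution collapses (Remark following Theorem \ref{theorem:3.1} gives $\mathcal{G}(\beta)=0$ and $F_{1/2}(\beta)=\log\bigl(\tfrac12(e^\beta+e^{-\beta})\bigr)$), I expect $F^{({\rm BC1})}(\beta)=\log(e^\beta+e^{-\beta})-\log 2$, matching the claim.

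For BC2 the boundary is thicker: all spins are fixed to $+1$ except those indexed by $(i_1,i_2)$ with $1<i_1<N_1$ and $1<i_2<N_2$. Here the combinatorial step is to count, using Lemma \ref{lemma:1.5}(1)--(2), how many chains of each length $\ell$ are genuinely free versus partly pinned, and to track the extra factor $\log 2$ per fixed spin that appears because a pinned endpoint removes one summation variable from $Z(\beta,0,\ell+1)$. The extra constant $-\bigl(\tfrac{2p_1p_2-1}{p_1p_2}\bigr)\log 2$ (as opposed to $-\log 2$ in BC1) should emerge precisely from the density $\tfrac{2p_1p_2-1}{2p_1p_2}$ of ``boundary-affected'' spin contributions computed in Lemma \ref{lemma:1.5}, doubled by the symmetric role of the two endpoints. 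For BCp, the periodic identification forces each chain to close into a cycle, so the relevant transfer-matrix object becomes a trace $\mathrm{Tr}\,M^\ell=\Lambda_+^\ell+\Lambda_-^\ell$ rather than the vector sandwich $v^T M^\ell v$; dividing by $\Lambda_+^\ell$ produces the factor $1+(\Lambda_-/\Lambda_+)^\ell = 1+\bigl(\tfrac{e^\beta-e^{-\beta}}{e^\beta+e^{-\beta}}\bigr)^\ell$, and summing against the density $\tfrac{(p_1p_2-1)^2}{(p_1p_2)^{\ell+1}}$ from Lemma \ref{lemma:1.5}(2)--(3) yields exactly the extra series in the stated $F^{({\rm BCp})}$.

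Parts (2) and (3) then follow the same template as Theorem \ref{theorem:3.1}. For differentiability, the BC1 and BC2 energy functions are manifestly smooth in $\beta$ (a logarithm of $e^\beta+e^{-\beta}$ plus a constant), while for BCp I would establish term-by-term differentiability of the series via the Weierstrass $M$-test, bounding the derivative of $\log\bigl[1+(\tanh\beta)^\ell\bigr]$ by a summable sequence exactly as in the proof of Theorem \ref{theorem:3.1}(2); the ratio $|\Lambda_-/\Lambda_+|=|\tanh\beta|\le 1$ makes this uniform bound routine. With finiteness near the origin and differentiability in hand, part (3) is immediate from the G\"artner--Ellis theorem (Theorem \ref{theorem:GE}), giving the Fenchel--Legendre rate function and, where $(F^{({\rm BCi})})'(\eta)=y$, the explicit value $I(y)=\eta y - F^{({\rm BCi})}(\eta)$. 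The main obstacle I anticipate is the careful bookkeeping in BC2: one must verify that the fixed boundary spins produce the precise coefficient $\tfrac{2p_1p_2-1}{p_1p_2}$ and not some competing density, which requires correctly classifying chains by whether zero, one, or both endpoints are pinned and confirming that doubly-pinned chains contribute negligibly in the limit.
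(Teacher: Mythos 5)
Your overall route is the same as the paper's: decompose via Lemma \ref{lemma:2.2} into the chains $\mathcal{L}_{N_1\times N_2}({\bf i})$, run the transfer-matrix computation of Theorem \ref{theorem:3.1} at $r=\tfrac12$, classify chains by how they meet the boundary, and invoke Theorem \ref{theorem:GE} for parts (2)--(3). Your BC1 argument matches the actual proof, which makes the negligibility of boundary-touching chains precise by a squeeze: $A_{N_1,N_2;\ell}\geq |\mathcal{K}_{N_1\times N_2;\ell}|-2E_{N_1,N_2;\ell}$ with $E_{N_1,N_2;\ell}=\lfloor N_1/p_1^{\ell-1}\rfloor-\lfloor N_1/p_1^{\ell}\rfloor+\lfloor N_2/p_2^{\ell-1}\rfloor-\lfloor N_2/p_2^{\ell}\rfloor$, and a vanishing remainder term. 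Your BCp mechanism (replace the vector sandwich by $\mathrm{tr}(V^\ell)=\Lambda_+^\ell+\Lambda_-^\ell$, factor out $\Lambda_+^\ell$, sum against the density $\frac{(p_1p_2-1)^2}{(p_1p_2)^{\ell+1}}$) and your M-test treatment of differentiability also coincide with the paper.

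The genuine gap is in BC2, and you partially flagged it yourself. First, the pinning under BC2 is \emph{not} confined to chains near the frame, so your plan to count ``how many chains of each length $\ell$ are genuinely free versus partly pinned'' starts from the wrong classification: since all spins outside the open box are fixed to $+1$, \emph{every} chain of length $\ell$ is pinned at the spin where it exits the box, and in the limit the paper's BC2 energy is $\lim\frac{1}{N_1N_2}\sum_{\ell}|\mathcal{K}_{N_1\times N_2;\ell}|\log\frac{1}{2^{\ell+1}}\bigl[(V^{\ell})_{11}+(V^{\ell})_{21}\bigr]$; treating only frame-touching chains as pinned would collapse BC2 back to BC1. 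Second, your proposed mechanism for the coefficient --- the density $\frac{2p_1p_2-1}{2p_1p_2}$ ``doubled by the symmetric role of the two endpoints'' --- is wrong: doubly pinned chains (those also starting on the frame) are a vanishing fraction, exactly as you suspected, so no doubling occurs. The correct accounting is that $(V^{\ell})_{11}+(V^{\ell})_{21}=(e^{\beta}+e^{-\beta})^{\ell}$ exactly, so the free-endpoint term $\frac{p_1p_2-1}{p_1p_2}\log|v^T\cdot e_+|^2=\frac{p_1p_2-1}{p_1p_2}\log 2$ of the free case simply disappears, while the normalization $2^{-(\ell+1)}$ per length-$\ell$ chain is retained; since $\sum_{\ell\geq1}\frac{(p_1p_2-1)^2}{(p_1p_2)^{\ell+1}}(\ell+1)=\frac{2p_1p_2-1}{p_1p_2}$, this yields the stated constant $-\frac{2p_1p_2-1}{p_1p_2}\log 2$. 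Note also that your phrase ``a pinned endpoint removes one summation variable'' must be implemented with the paper's convention that the factor $2^{-(\ell+1)}$ is kept (i.e., the fixed spin retains its weight $\tfrac12$); if one instead renormalized over the $\ell$ free spins, the singly pinned chain would contribute $2^{-\ell}(e^{\beta}+e^{-\beta})^{\ell}$, identical to a free chain at $r=\tfrac12$, and $F^{({\rm BC2})}$ would wrongly equal $F^{({\rm BC1})}$.
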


\begin{proof}
\item[\bf 1.]Applying the decomposition in Section 3 to (\ref{FBC}), we have 
\begin{equation*}
\begin{aligned}
F^{({\rm BC1})}(\beta)&=\lim_{N_1,N_2\rightarrow\infty}\frac{1}{N_1N_2} \left\{\sum_{\ell=1}^{N_1N_2}A_{N_1,N_2;\ell}\log \frac{1}{2^{\ell+1}} \left|V^{\ell} \right|  \right.\\
&+\sum_{\ell=1}^{N_1N_2}B_{N_1,N_2;\ell}\log\frac{1}{2^{\ell+1}}\left[ (V^{\ell})_{11}+(V^{\ell})_{21}\right]    \\
&+\sum_{\ell=1}^{N_1N_2}C_{N_1,N_2;\ell}\log\frac{1}{2^{\ell+1}}\left[ (V^{\ell})_{11}+(V^{\ell})_{12}\right]    \\
&+\left.\sum_{\ell=1}^{N_1N_2}D_{N_1,N_2;\ell}\log\frac{1}{2^{\ell+1}} (V^{\ell})_{11}\right\},
\end{aligned}
\end{equation*} 	 
where the coefficients $A_{N_1,N_2;\ell}$, $B_{N_1,N_2;\ell}$, $C_{N_1,N_2;\ell}$ and $D_{N_1,N_2;\ell}$ are the numers of $\ell$ length sublatticies in $N_1\times N_2$ lattice which intersects boundary empty, exact starting point, exact end and two sides respectively.

For any $\beta \in \mathbb{R}$, we have
\begin{equation}\label{A limit}
\begin{aligned}
\lim_{N_1,N_2\rightarrow\infty}\frac{1}{N_1N_2} \sum_{\ell=1}^{N_1N_2}A_{N_1,N_2;\ell}\log \frac{1}{2^{\ell+1}} \left|V^{\ell} \right|   \leq F^{({\rm BC1})}(\beta)  
\end{aligned}
\end{equation}
and
\begin{equation}\label{K limit}
	\begin{aligned}
		 F^{({\rm BC1})}(\beta) \leq  \lim_{N_1,N_2\rightarrow\infty}\frac{1}{N_1N_2} \sum_{\ell=1}^{N_1N_2}| \mathcal{K}_{N_1\times N_2;\ell} |\log \frac{1}{2^{\ell+1}} \left|V^{\ell} \right|,  
	\end{aligned}
\end{equation} 
where $\mathcal{K}_{N_1\times N_2;\ell}$ is defined in Section 2.

Indeed, 
\begin{equation}\label{A estimate}
\begin{aligned}
| \mathcal{K}_{N_1\times N_2;\ell} |-2\left(\left\lfloor\frac{N_{1}}{p_{1}^{\ell-1}}\right\rfloor-\left\lfloor\frac{N_{1}}{p_{1}^\ell}\right\rfloor +
\left\lfloor\frac{N_{2}}{p_{2}^{\ell-1}}\right\rfloor-\left\lfloor\frac{N_{2}}{p_{2}^\ell}\right\rfloor \right)  \leq A_{N_1,N_2;\ell},
	\end{aligned}
\end{equation} 
since boundaries
\begin{equation*}
	\begin{aligned}
		(1,1),...,(1,N_2) \mbox{ and } (N_1,1),...,(N_1,N_2)
	\end{aligned}
\end{equation*}  
intersect at most $\left\lfloor\frac{N_{2}}{p_{2}^{\ell-1}}\right\rfloor-\left\lfloor\frac{N_{2}}{p_{2}^\ell}\right\rfloor$ sublatticies with length $\ell$, and boundaries  
\begin{equation*}
	\begin{aligned}
		(1,1),...,(N_1,1) \mbox{ and } (1,N_2),...,(N_1,N_2)
	\end{aligned}
\end{equation*}  
intersect at most $\left\lfloor\frac{N_{1}}{p_{1}^{\ell-1}}\right\rfloor-\left\lfloor\frac{N_{1}}{p_{1}^\ell}\right\rfloor$ sublatticies with length $\ell$.

Then (\ref{A limit}) and (\ref{A estimate}) give
\begin{equation}\label{squeeze theorem1}
	\begin{aligned}
		\lim_{N_1,N_2\rightarrow\infty}\frac{1}{N_1N_2} \sum_{\ell=1}^{N_1N_2}\left(| \mathcal{K}_{N_1\times N_2;\ell} |-2E_{N_1,N_2;\ell}\right)\log \frac{1}{2^{\ell+1}} \left|V^{\ell} \right|\leq F^{({\rm BC1})}(\beta), 
	\end{aligned}
\end{equation}
where $E_{N_1,N_2;\ell}=\left\lfloor\frac{N_{1}}{p_{1}^{\ell-1}}\right\rfloor-\left\lfloor\frac{N_{1}}{p_{1}^\ell}\right\rfloor +
\left\lfloor\frac{N_{2}}{p_{2}^{\ell-1}}\right\rfloor-\left\lfloor\frac{N_{2}}{p_{2}^\ell}\right\rfloor$.

On the other hand, combining (\ref{K limit}) and Theorem \ref{theorem:3.1}, 
\begin{equation}\label{squeeze theorem2}
	\begin{aligned}
		F^{({\rm BC1})}(\beta)   \leq  F_{\frac{1}{2}}(\beta),  
	\end{aligned}
\end{equation} 
where $F_{\frac{1}{2}}(\beta)=\log \left(\frac{1}{2}(e^\beta+e^{-\beta}) \right)$.

It remains to show 
\begin{equation}\label{remainder1}
	\begin{aligned}
		\lim_{N_1,N_2\rightarrow\infty}\frac{1}{N_1N_2} \sum_{\ell=1}^{N_1N_2}2E_{N_1,N_2;\ell}\log \frac{1}{2^{\ell+1}} \left|V^{\ell} \right|=0.
	\end{aligned}
\end{equation}
It is enough to consider (\ref{remainder1}) as
\begin{equation}\label{remainder2}
\begin{aligned}
\lim_{N_1,N_2\rightarrow\infty}\frac{C}{N_1N_2} \sum_{\ell=1}^{\infty}\ell\left( N_1(\frac{1}{p_1^{\ell-1}}-\frac{1}{p_1^{\ell}})+N_2(\frac{1}{p_2^{\ell-1}}-\frac{1}{p_2^{\ell}}) \right),
\end{aligned}
\end{equation}
where $C$ is a constant dependent only on the maximum eigenvalue of $V$. Indeed, (\ref{remainder2}) is equal to $0$ by direct computation.

By the Squeeze Theorem with (\ref{squeeze theorem1}), (\ref{squeeze theorem2}) and (\ref{remainder1}), we obtain
\begin{equation*}
\begin{aligned}
F^{({\rm BC1})}(\beta)  =\log \left(\frac{1}{2}(e^\beta+e^{-\beta}) \right).
\end{aligned}
\end{equation*}
\item[\bf 2.]Since the free energy function corresponding to (BC2) is 
\begin{equation*}
	\begin{aligned}
	F^{({\rm BC2})}(\beta)=&\lim_{N_1,N_2\rightarrow\infty}\frac{1}{N_1N_2} \left\{ \sum_{\ell=1}^{N_1N_2}A_{N_1,N_2;\ell}\log \frac{1}{2^{\ell+1}} [(V^{\ell})_{11}+(V^{\ell})_{21} ]  \right.\\
		&+\sum_{\ell=1}^{N_1N_2}B_{N_1,N_2;\ell}\log\frac{1}{2^{\ell+1}}[ (V^{\ell-1})_{11}V_{11}+(V^{\ell-1})_{21}V_{11}]   \\
		&+\sum_{\ell=1}^{N_1N_2}C_{N_1,N_2;\ell}\log\frac{1}{2^{\ell+1}}[ (V^{\ell-1})_{11}V_{11}+(V^{\ell-1})_{12}V_{21}]   \\
		&+\left.\sum_{\ell=1}^{N_1N_2}D_{N_1,N_2;\ell}\log\frac{1}{2^{\ell+1}}(V^{\ell-1})_{11}V_{11}\right\}.
	\end{aligned}
\end{equation*} 
 Similar to the proof of $F^{({\rm BC1})}$, we have
 \begin{equation*}
 	\begin{aligned}
 		F^{({\rm BC2})}(\beta)  =\lim_{N_1,N_2\rightarrow\infty}\frac{1}{N_1N_2} \sum_{\ell=1}^{N_1N_2}| \mathcal{K}_{N_1\times N_2;\ell} |\log\frac{1}{2^{\ell+1}} [(V^{\ell})_{11}+(V^{\ell})_{21} ],
 	\end{aligned}
 \end{equation*}
where
 \begin{equation*}
	\begin{aligned}
V^\ell=\left[\begin{matrix}
	\frac{1}{2}(e^\beta+e^{-\beta})^{\ell}+\frac{1}{2}(e^\beta-e^{-\beta})^{\ell} & \frac{1}{2}(e^\beta+e^{-\beta})^{\ell}-\frac{1}{2}(e^\beta-e^{-\beta})^{\ell}  \\
	\frac{1}{2}(e^\beta+e^{-\beta})^{\ell}-\frac{1}{2}(e^\beta-e^{-\beta})^{\ell} & \frac{1}{2}(e^\beta+e^{-\beta})^{\ell}+\frac{1}{2}(e^\beta-e^{-\beta})^{\ell}	
\end{matrix}\right].
 	\end{aligned}
\end{equation*}
By Lemmas \ref{lemma:2.2} and \ref{lemma:1.5}, we have
 \begin{equation*}
	\begin{aligned}
		F^{({\rm BC2})}(\beta)&=\log \left(\frac{1}{2}(e^\beta+e^{-\beta}) \right)+\sum_{\ell=1}^{\infty}\frac{(p_1p_2-1)^2}{(p_1p_2)^{\ell+1}}\log\left(\frac{1}{2}\right)\\
		&=\log \left(\frac{1}{2}(e^\beta+e^{-\beta}) \right)-\left( \frac{p_1p_2-1}{p_1p_2}\right)\log2\\
		&=\log(e^\beta+e^{-\beta})-\left( \frac{2p_1p_2-1}{p_1p_2}\right)\log2.
	\end{aligned}
\end{equation*}

\item[\bf 3.]Using the similar method as above, we obtain
\begin{equation*}
	\begin{aligned}
		F^{({\rm BCp})}(\beta)  =\lim_{N_1,N_2\rightarrow\infty}\frac{1}{N_1N_2} \sum_{\ell=1}^{N_1N_2}| \mathcal{K}_{N_1\times N_2;\ell} |\log\frac{1}{2^{\ell+1}} {\rm tr}(V^{\ell}).
	\end{aligned}
\end{equation*}
The proof is completed by the Lemma \ref{lemma:2.2}, Lemma \ref{lemma:1.5} and direct computation. 
\item[\bf 4.]Theorem \ref{theorem bc1} (2) follows from the direct calculation of the formula established in Theorem \ref{theorem bc1} (1). Theorem \ref{theorem bc1} (3) follows from Theorem \ref{theorem:GE}. This completes the proof.
\end{proof}

\begin{remark}
The reason that $F^{({\rm BC1})}$ equals (\ref{F1/2}) is the (BC1) effects at most $2(N_1+N_2)$ sublatticies and $\frac{2(N_1+N_2)}{N_1N_2}$ has limit zero as $N_1$ and $N_2$ tend to infinity. On the other hand, (BC2) and (BCp) effect almost all sublatticies in $N_1\times N_2$ lattice. This makes the difference between $F^{({\rm BC1})}$ and $F^{({\rm BCi})}$, $\rm i\in\{ 2,p\}$.
\end{remark}

\section{Conclusion and some open problems}

\subsection{Conclusion}
In Section 3, we obtain the explicit formula of the free energy function $F_r(\beta)$ associated to multiple sum (\ref{14}). Then we establish LDP of multiple average (\ref{multiple sum average}) with the rate function $I_{r}(y)=\eta y -F_r(\eta)$, if $F_r'(\eta)=y$, by the differentiability of $F_r(\beta)$ and Theorem \ref{theorem:GE}. In section 4, the LDP of the weighted multiple average (\ref{12}) is also established for $r=\frac{1}{2}$. Note that, when $r\neq \frac{1}{2}$, the free energy funciotn is difficult to compute since the matrices are not commute in general. The boundary conditions (BCi), i$\in \{\rm 1,2,p\}$ are imposed to the multiple sum (\ref{14}) in Section 5. The rigorous formulae of energy functions associated with the boundary conditions are derived. Consequently, the LDP results follow as well. The following problem remains.

\begin{problem}
In this article, we only consider the $2$-multiple sum (\ref{14}). The $k$-multiple sum is defined similarly. Namely, for ${\bf p}_1,{\bf p}_2,...,{\bf p}_{k-1} \in \mathbb{N}^d$, define
\begin{equation*}
	\begin{aligned}
		S_{N_1\times N_2\times \cdots \times N_d}^{{\bf p}_1,{\bf p}_2,...,{\bf p}_{k-1}}:=\sum_{i_i=1}^{N_1}\sum_{i_2=1}^{N_2}\cdots \sum_{i_d=1}^{N_d} \sigma_{\bf i}\sigma_{{\bf p}_1\cdot {\bf i}}\sigma_{{\bf p}_2\cdot {\bf i}}\cdots \sigma_{{\bf p}_{k-1}\cdot {\bf i}}.
	\end{aligned}
\end{equation*}	
Theorem \ref{theorem:3.1} (or Thoerem \ref{theorem:4.1}) demonstrates the absence of the phase transition phenomena of the multiplicative Ising model with 2-multiple sum. Dose the phase transition phenomena occur with respect to the $k$-multiple sum for $k\geq 3$? Such a problem depends on explicitly calculating the free energy function $F_r(\beta)$, and this is an exremely difficult task.
\end{problem}

\bibliographystyle{amsplain}

% bibliography ---------------------------------------------------
%amsplain, amsalpha, abbrvnat, alpha, ieeetr, abbrv, unsrt, acm, plainnat, plain, siam

% -------------------------------------------------------------
\end{document}